\newtheorem{thm}{Theorem}[section]
\newtheorem{cor}[thm]{Corollary}
\newtheorem{lem}[thm]{Lemma}
\theoremstyle{definition}
\newtheorem{defn}[thm]{Definition}
\theoremstyle{remark}
\newtheorem{rem}[thm]{Remark}
 \numberwithin{equation}{section}
\newcommand{\opnorm}[1]{|\!|\!| #1 |\!|\!|}
\renewcommand{\Re}{\operatorname{Re}}
\newcommand{\lub}{\operatorname{l{.}u{.}b{.}}}
\newcommand{\C}{\mathbb{C}}
\newcommand{\N}{\mathbb{N}}
\newcommand{\R}{\mathbb{R}}
\newcommand{\Z}{\mathbb{Z}}
\newcommand{\expe}{\mathrm{e}}
\newcommand{\wrt}{\mathrm{d}}
\newcommand{\loc} {\mathrm{loc}}
\newcommand{\BMO}{\mathrm{BMO}} 
\title[On subordinated semigroups and applications]
{On subordinated semigroups and Hardy spaces associated to fractional powers of operators}         
\author{The Anh Bui}
\address{School of Mathematical and Physical Sciences, Macquarie University, Sydney, Australia}
\email{the.bui@mq.edu.au}
\author{Michael G. Cowling}
\address{    School of Mathematics and Statistics, University of New South Wales, Sydney, Australia}
\email{m.cowling@unsw.edu.au }
\author{Xuan Thinh Duong}
\address{School of Mathematical and Physical Sciences, Macquarie University, Sydney, Australia}
\email{xuan.duong@mq.edu.au}
\keywords{Subordination formula, Hardy space, BMO space, Littlewood--Paley characterization}
\subjclass[2010]{42B30, 42B35}
\begin{document}    

\begin{abstract}  
Let $L$ be a positive self-adjoint operator on $L^2(X)$, where $X$ is a $\sigma$-finite metric 
measure space.
When $\alpha \in (0,1)$, the subordinated semigroup $\{\exp(-tL^{\alpha}):t \in \R^+\}$ can be defined on $L^2(X)$ and extended to $L^p(X)$. 
We prove various results about the semigroup $\{\exp(-tL^{\alpha}):t \in \R^+\}$, under different assumptions on $L$.
These include the weak type $(1,1)$ boundedness of the maximal operator $f \mapsto \sup _{t\in \R^+}\exp(-tL^{\alpha})f$ 
and characterisations of Hardy spaces associated to the operator $L$ by the area integral and vertical square function.
\end{abstract}
\date{}
\maketitle

\section{Introduction}
Let $L$ be a positive self-adjoint operator on $L^2(X)$ where $X$ is a metric measure space, with metric $\varrho$ and measure $\mu$. We may define the heat operators $H_z := \exp(-zL)$, where $z \in \C$ and $\arg(z) \leq \pi/2$, by spectral theory; these operators form an analytic semigroup.
Some of them extend continuously to $L^p(X)$ for some other values of $p$ (see \cite{CaDr}). 

When $0 < \alpha < 1$, the fractional powers $L^{\alpha}$ can be defined.
The study of  fractional powers $L^{\alpha}$, where $L$ is a Laplace operator, or a Schr\"odinger operator, or a Hardy operator, has attracted considerable interest recently: see, for example \cite{CS, CSV}.
These fractional powers of operators arise from partial differential equations that model real-world phenomena, as well as from applications in the mathematical sciences.
For instance, the fractional Laplacian on $\R^n$ is used to model a L\'evy jump process by superdiffusion in euclidean spaces, and the fractional Laplace--Beltrami operator is used for this process in curved manifolds.

The fractional powers $L^{\alpha}$ generate semigroups $\{ \exp(-tL^{\alpha}): t \in \R^+\}$ on $L^2(X)$.
It is well known that the operators $\exp(-tL^{\alpha})$ can be expressed in terms of the operators $\exp(-sL)$  by subordination, hence one can study $\exp(-tL^{\alpha})$ via the $\exp(-sL)$.
It is interesting that the operators $\exp(-tL^{\alpha})$ have some properties that the operators $\exp(-sL)$ need not possess. 
For example one can find upper bounds on the kernels of the time derivatives ${d^k \exp(-tL^{\alpha}) / dt^k}$, but not the kernels of the time derivatives ${d^k \exp(-tL) / dt^k}$, in terms of upper bounds for the kernels of $\exp(-tL)$. 
The aim of this paper is to study the subordinated semigroups $\{ \exp(-tL^{\alpha}): t \in \R^+\}$,  and some of their applications. 

This paper is structured as follows.
In Section 2, we take $\alpha$ to be $1/2$ and study the Poisson semigroup $\{ \exp(-tL^{1/2}): t \in \R^+\}$. 
In this case, the subordination formula is explicit and we can obtain sharp estimates for the Poisson operators $\exp(-tL^{1/2})$ and their time derivatives $L^{k/2} \exp(-tL^{1/2})$ when  
$t \in \R^+$, and for their analytic continuations $\exp(-zL^{1/2})$, when $z \in \C$ and $\left|\arg z \right| < {\pi / 4} $. 
Our results include  estimates of maximal operators associated to the Poisson semigroup on a sector in the case of Markov semigroups.

In Section 3, we recall the properties of L\'evy distributions, and then use these to study the
subordinated operators $\exp(-tL^{\alpha})$ for $t  \in \R^+$. 
We show that if $L$ is the generator of a symmetric Markov semigroup $\{ \exp(-tL): t \in \R^+\}$, then the maximal operator $f \mapsto \sup_{t \in \R^+} |\exp(-tL^{\alpha})f|$ is of weak type $(1,1)$. 
In \cite{Cowling}, the $L^p$ boundedness of this maximal operator was proved for all $p \in (1,\infty]$; this result covers the endpoint when $p=1$. 

In Section 4, we assume that the quasi-metric space $X$ is Ahlfors--David regular, that is, the volume of a ball satisfies $\mu (B(x,r)) \eqsim r^n$, for some $n$, and that $L$ is a positive self-adjoint operator on $L^2(X)$ for which the semigroup $\{ \exp(-tL): t \in \R^+\}$ on $L^2(X)$ has Poisson-type  upper bounds. 
We show that the Hardy space associated to the operator $L$ can be characterised by the area function and by the vertical square function of the subordinated semigroup $\{\exp(-tL^{\alpha}): t \in \R^+\}$ for $0 < \alpha < 1$. 
This new result proves that the Hardy space associated to $L$ is equivalent to the Hardy space associated to the fractional powers $L^{\alpha}$.

A note on notation: we write $\Gamma_{\beta}$ for the open sector $\{ z \in \C: \left| \arg z \right| < \beta  \}$ and $\bar\Gamma_{\beta}$ for its closure; we take the branch cut for $\arg$ (and hence also for complex powers) along the negative real axis.

\section{Poisson semigroups}\label{s1}

In this section, we study the Poisson semigroup, which is generated by $L^{1/2}$, that is, $P_t = \exp(-tL^{1/2})$ for all $t \in \R^+$. 
In this case, there is an explicit subordination formula that represents the Poisson semigroup in terms of the heat semigroup. 
We will employ this formula to continue $t \mapsto P_t$  analytically and obtain operators $P_z = \exp(-zL^{1/2})$ for $z \in \bar\Gamma_\beta$, where $\beta < \pi/4$, and estimate the time derivatives $P_{z,k} = (z L^{1/2})^k P_z = (-1)^k \, z^k \, {d^k P_z}/{dz^k} $ in terms of the operators $\exp(-tL^{1/2})$.

We first show that the time derivatives of the Poisson semigroup are bounded by the Poisson semigroup itself. 
Let $T$ be a bounded operator on $L^2(X)$. 
We say that $T$ has an associated kernel $k$ on $X\times X$ when $k$ is locally integrable on $X\times X$ with respect to the product measure $\mu\times \mu$, and moreover 
\[
\langle Tf,g\rangle =\int_X\int_X k(x,y)f(y)f(x)\,\wrt \mu(y)\,\wrt \mu(x)
\] 
for all $f,g\in L^2(X)$.

\begin{thm}\label{time-derivative}
Assume that
\begin{enumerate}[{(i)}]
\item $L$ is a positive self-adjoint operator on $L^2(X)$;

\item the heat semigroup $\{H_t= \exp(-tL): t  \in \R^+\}$ on $L^2(X)$ is positivity-preserving, in the sense that $\exp(-tL)f \ge 0$ if $f \geq 0$.
\end{enumerate} 
Then
\begin{enumerate}[{(a)}]
\item for each positive integer $k$ and $\theta \in (0,1)$, 
\[
|P_{t,k} f(x)| = | (t L^{1/2})^k \exp(-tL^{1/2}) f(x)|  
\lesssim_{k,\theta} P_{\theta t}(|f|)(x) 
\]
for all $x\in X$, all $t \in \R^+$ and all $f \in L^2(X)$.

\item Assume that the heat operators $H_t$ have kernels $h_t(x,y)$. 
Then the operators $P_t$ have kernels $p_t$ and the operators  $P_{t,k}$ have kernels $p_{t,k}$ and, for all positive integers $k$ and $\theta \in (0,1)$, 
\[
| p_{t,k} (x,y) |    \lesssim_{k,\theta}  p_{\theta t}(x,y)  \qquad \forall x,y\in X \quad\forall t \in \R^+.
\]
\end{enumerate}
\end{thm}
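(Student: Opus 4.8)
The plan is to run everything through the classical subordination formula for the $1/2$-stable subordinator,
\[
P_t = \exp(-tL^{1/2}) = \int_0^\infty \phi_t(s)\,H_s\,\wrt s,
\qquad
\phi_t(s) = \frac{t}{2\sqrt{\pi}}\,s^{-3/2}\exp\!\Big(-\frac{t^2}{4s}\Big),
\]
which holds because the scalar identity $\exp(-t\sqrt\lambda) = \int_0^\infty \phi_t(s)\exp(-s\lambda)\,\wrt s$ is valid for every $\lambda \geq 0$ (at $\lambda=0$ it says $\int_0^\infty\phi_t(s)\,\wrt s = 1$), and one integrates it against the spectral measure of $L$. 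Differentiating the scalar identity $k$ times in $t$ — legitimate by dominated convergence, with dominating function furnished by the elementary bounds below — and integrating against the spectral measure once more gives
\[
P_{t,k} = (tL^{1/2})^k\exp(-tL^{1/2}) = (-1)^k t^k \int_0^\infty (\partial_t^k\phi_t)(s)\,H_s\,\wrt s
\]
as bounded operators on $L^2(X)$, the integral converging absolutely in operator norm because $\partial_t^k\phi_t \in L^1((0,\infty),\wrt s)$ and $\|H_s\|_{2\to 2}\le 1$. Since $H_s$ is positivity-preserving we have $|H_sf|\leq H_s|f|$ pointwise, so, realising the $L^2$-valued Bochner integrals pointwise, $|P_{t,k}f(x)| \leq \int_0^\infty |t^k(\partial_t^k\phi_t)(s)|\,H_s|f|(x)\,\wrt s$ for a.e. $x$. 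Thus (a) reduces to the one-variable estimate
\[
|t^k(\partial_t^k\phi_t)(s)| \lesssim_{k,\theta} \phi_{\theta t}(s) \qquad \forall s,t \in \R^+ ,
\]
whereupon $|P_{t,k}f(x)| \lesssim_{k,\theta} \int_0^\infty \phi_{\theta t}(s)H_s|f|(x)\,\wrt s = P_{\theta t}(|f|)(x)$.

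For this key estimate I would exploit the scaling $\phi_t(s) = t^{-2}\Phi(s/t^2)$ with $\Phi(r)=\tfrac{1}{2\sqrt\pi}r^{-3/2}e^{-1/(4r)}$. Writing $t^k\partial_t^k$ in terms of the Euler operator $t\,\partial_t$ and using the chain rule, one gets $t^k(\partial_t^k\phi_t)(s) = t^{-2}\Phi_k(s/t^2)$, where $\Phi_k$ arises from $\Phi$ by a fixed differential operator with polynomial coefficients; since each $r$-derivative of $\Phi$ only introduces extra factors polynomial in $1/r$, we have $\Phi_k(r) = Q_k(1/r)\,r^{-3/2}e^{-1/(4r)}$ for some polynomial $Q_k$ (equivalently, $t^k\partial_t^k\phi_t(s)$ equals $\phi_t(s)$ times a polynomial in $t^2/s$, expressible via the Hermite polynomial $H_{k+1}$). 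Comparing with $\phi_{\theta t}(s) = t^{-2}\cdot\tfrac{\theta}{2\sqrt\pi}(s/t^2)^{-3/2}e^{-\theta^2 t^2/(4s)}$, the factors $t^{-2}$ and $(s/t^2)^{-3/2}$ cancel, and with $w = t^2/(4s)$ the estimate becomes $|Q_k(w)|\,e^{-(1-\theta^2)w}\lesssim_{k,\theta}1$ for all $w\in\R^+$; this holds precisely because $\theta\in(0,1)$ makes $1-\theta^2>0$, so the exponential beats the polynomial at $w=\infty$ while both factors stay bounded as $w\to 0^+$. As expected, the constant degenerates as $\theta\uparrow 1$ and as $\theta\downarrow 0$.

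For part (b), when the $H_s$ have kernels $h_s$, positivity-preservation forces $h_s\geq 0$ $\mu\times\mu$-a.e.; Tonelli's theorem then shows that $P_t$ and $P_{t,k}$ have kernels $p_t(x,y)=\int_0^\infty\phi_t(s)h_s(x,y)\,\wrt s$ and $p_{t,k}(x,y)=(-1)^k t^k\int_0^\infty(\partial_t^k\phi_t)(s)h_s(x,y)\,\wrt s$, and the bound $|p_{t,k}(x,y)|\lesssim_{k,\theta}p_{\theta t}(x,y)$ follows by inserting the one-variable estimate exactly as in (a).

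The only genuine content is the one-variable estimate $|t^k(\partial_t^k\phi_t)(s)|\lesssim_{k,\theta}\phi_{\theta t}(s)$; everything else is bookkeeping. The points requiring (routine) care are the justification of differentiating under the integral sign and the passage between the operator identity, its pointwise realisation, and its kernel realisation — all handled by dominated convergence and Tonelli once one notes that $\partial_t^k\phi_t\in L^1(\wrt s)$ locally uniformly in $t$ and that $\|H_s\|_{2\to 2}\le 1$. I do not anticipate a real obstacle: the subordination structure does all the work, and the hypothesis $\theta<1$ is exactly what absorbs the polynomial factor produced by the $t$-derivatives.
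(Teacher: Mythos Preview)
Your proposal is correct and follows essentially the same line as the paper's own proof: subordination formula, differentiate in $t$, use positivity-preservation to pass the absolute value inside, and reduce everything to the one-variable estimate $|t^k\partial_t^k\phi_t(s)|\lesssim_{k,\theta}\phi_{\theta t}(s)$, which is a polynomial-versus-exponential bound in the variable $w=t^2/4s$. The only cosmetic difference is that the paper writes $te^{-t^2/4u}=-2u\,\partial_t(e^{-t^2/4u})$ and then applies Fa\`a di Bruno to $\partial_t^{k+1}(e^{-t^2/4u})$, whereas you package the same computation via the scaling $\phi_t(s)=t^{-2}\Phi(s/t^2)$ and the Hermite-type structure of the derivatives; for part (b) the paper spells out the local integrability of the kernel by pairing against $L^2$ functions, while you appeal to Tonelli/Fubini more tersely---both amount to the same bookkeeping.
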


\begin{proof}
We start with the subordination formula: for all $\lambda \in \R^+$,
\[
\expe ^{-\lambda} 
= \frac{1 }{ \sqrt{\pi}} \int_0^{\infty} \expe ^{-s}  \expe ^{-\lambda ^2 / 4s} \,\frac{\wrt s}{s^{1/2}} ,
\]
so for a positive self adjoint operator $L$, 
\[
\exp(-tL^{1/2})f  =  \frac{1}{\sqrt{\pi}}\int_0^{\infty} \expe ^{-s}   \exp(-t^2L / 4s) f  \,\frac{\wrt s}{s^{1/2}} 
\qquad\forall f\in L^2(X) \quad\forall t \in \R^+.
\]

After the change of variable $u = t^2/4s$, we obtain
\begin{equation}\label{SubordinateFormula}
\exp(-tL^{1/2})f 
 =  \frac{1}{2\sqrt{\pi}}\int_0^\infty t \expe ^{-{t^2}/{4u}} \exp(-uL)f \,\frac{\wrt u}{u^{3/2}} \,.
\end{equation}
Hence
\begin{equation}\label{SubordinateFormula ptk}
\begin{aligned}
(tL^{1/2})^{k}\exp(-tL^{1/2})f&= (-1)^k\frac{t^k}{2\sqrt{\pi}}\int_0^\infty \partial^k_t(t\expe ^{-{t^2}/{4u}})\exp(-uL)f 
\,\frac{\wrt u}{u^{3/2}}\\
&=(-1)^{  k+1} \frac{1}{ \sqrt{\pi}}\int_0^\infty t^k \partial^{k+1}_t(\expe ^{-{t^2}/{4u}})\exp(-uL)f 
\,\frac{\wrt u}{u^{1/2}} .
\end{aligned}
\end{equation}
This, along with assumption (ii), yields the inequality
\begin{equation}\label{eq1-prop2.1}
\begin{aligned}
|P_{t,k}f|
&{ {}\le{}}\frac{1}{ \sqrt{\pi}}\int_0^\infty \frac{t^k}{u^{1/2}} \left|\partial^{k+1}_t(\expe ^{-{t^2}/{4u}})\right| \exp(-uL)(|f|) \,\wrt u.
\end{aligned}
\end{equation}

Let $u\in \R^+$ and $k\in \N$. 
By Fa\`a di Bruno's formula, we can write
\[
\partial^{k+1}_t (\expe ^{-{t^2}/{4u}} )
=\sum \frac{(-1)^{m_1+m_2}}{ (2m_1)! \, m_2!} \expe ^{-{t^2}/{4u} }\Big(\frac{t}{4u}\Big)^{m_1}\Big(\frac{1}{4u}\Big)^{m_2},
\]
where the sum is taken over all pairs $(m_1,m_2)$ of nonnegative integers satisfying $m_1+2m_2=k+1$. 
For every  such pair $(m_1,m_2)$, and any positive $\epsilon$,
\begin{align*}
\frac{t^k}{u^{1/2}} \Big(\frac{t}{4u}\Big)^{m_1}\Big(\frac{1}{4u}\Big)^{m_2}
=  \frac{1}{4^{m_1+m_2}}\frac{t}{u^{3/2}}  \frac{t^{2m_1 + 2m_2 -2} } {u^{m_1+m_2 - 1} } 
\leq \frac{t}{u^{3/2}} \frac{(m_1 + m_2 -1)!}{4 (4\epsilon)^{m_1+m_2 - 1}} \expe ^{{\epsilon t^2}/{4u}}  .
\end{align*}
This implies that for all $\theta \in (0,1)$ there exists $c(\theta ,k)$ such that
\begin{equation}\label{eq2-prop2.1}
\frac{t^k}{u^{1/2}} \left| \partial^{k+1}_t \expe ^{-{t^2}/{4u}}\right|
\leq c(\theta ,k)  \frac{t}{u^{3/2}} \expe ^{-{( \theta t)^2}/{4u}}.
\end{equation}

From \eqref{eq1-prop2.1} and \eqref{eq2-prop2.1}, we deduce that
\[
\begin{aligned}
|P_{t,k}f|
&\leq \frac{c(\theta,k)}{\sqrt{\pi}}\int_0^\infty t\expe ^{-{(\theta t)^2}/{4u}}\exp(-uL)(|f|) \,
\frac{\wrt u}{u^{3/2}}\\
&=\frac{c(\theta,k)}{\theta \sqrt{\pi}}\int_0^\infty (\theta t)\expe ^{-{(\theta t)^2}/{4u}}\exp(-uL)(|f|) \,\frac{\wrt u}{u^{3/2}}\\
&=C(\theta,k) P_{\theta t}(|f|),
\end{aligned}
\]
say, where in the last equality we used \eqref{SubordinateFormula}.
This completes the proof of (a).

We now prove (b).
For all $f, g \in L^2(X)$,
\[
\langle \exp(-tL)f, g \rangle = \int_X \int_X h_t(x,y) f(x) g(y) \,\wrt \mu(x) \,\wrt \mu(y) .
\]
Since $L$ is positive and self-adjoint, $| \langle \exp(-tL)f, g \rangle | \le \| f \|_2 \| g \|_2$.
From the subordination formula, we see that
\[
\bigl\langle \exp(-tL^{1/2})f, g \bigr\rangle 
= \frac{1}{ 2\sqrt{\pi}} 
\int_X \int_X \int_0^{\infty} t\expe ^{{-t^2}/{4u}} h_u(x,y) f(x) g(y) \,\frac{\wrt u}{u^{3/2}} \,\wrt \mu(x) \,\wrt \mu(y) .
\]
We change the order of integration, integrating first  with respect to $x$ and $y$, and obtain
\[
\bigl|\bigl\langle \exp(-tL^{1/2})f, g \bigr\rangle\bigr| 
\leq \frac{1}{ 2\sqrt{\pi}} \| f \|_2 \| g \|_2 \int_0^{\infty} t\expe ^{{-t^2}/{4u}}\,\frac{\wrt u}{u^{3/2}} .
\]
A further integration now gives
\[
\bigl|\bigl\langle \exp(-tL^{1/2})f, g \bigr\rangle\bigr| \le  \| f \|_2 \| g \|_2 .
\]

We now show that the kernel
\[
(x,y) \mapsto \int_0^{\infty}  t\expe ^{{-t^2}/{4u}}  h_u(x,y) \frac{\wrt u }{ u^{3/2}}
\]
is locally integrable in $X\times X$.
If $A$ is a bounded set in $X\times X$, then $A\subset \Omega\times \Omega$ for some bounded set  $\Omega$ in $X$, and
\[
\begin{aligned}
&\iint_A \biggl|\int_0^{\infty} t\expe ^{{-t^2}/{4u}} h_u(x,y) \,\frac{\wrt u}{ u^{3/2}}\biggr| \,\wrt \mu(x) \,\wrt \mu(y) \\
&\qquad=\iint_A \int_0^{\infty} t\expe ^{{-t^2}/{4u}} h_u(x,y) \,\frac{\wrt u}{u^{3/2}} \,\wrt \mu(x) \,\wrt \mu(y)\\
&\qquad\le\int_X\int_X \int_0^{\infty}t\expe ^{{-t^2}/{4u}} h_u(x,y) 1_\Omega(y)1_\Omega(x) \,\frac{\wrt u}{u^{3/2}} \, d\mu(x) \,\wrt \mu(y).
\end{aligned}
\]
From above, 
\[
\begin{aligned}
\iint_A \biggl| \int_0^{\infty} t\expe ^{{-t^2}/{4u}} h_u(x,y) \,\frac{\wrt u}{u^{3/2}} \biggr| \,\wrt \mu(x) \,\wrt \mu(y)
\le \|1_\Omega\|_2\|1_\Omega\|_2 \,,
\end{aligned}
\]
which shows that 
\[
(x,y) \mapsto \int_0^{\infty} t\expe ^{{-t^2}/{4u}}  h_u(x,y) \,\frac{\wrt u}{ u^{3/2}}
\]
is locally integrable.

Thus the kernel of $\exp(-tL^{1/2})$ is given by
\[
p_t(x,y) 
=  \frac{1}{2\sqrt{\pi}} \int_0^{\infty} t\expe ^{{-t^2}/{4u}} h_u(x,y) \,\frac{\wrt u}{u^{3/2}} \,,
\]
that is, 
\[
\bigl\langle\exp(-tL^{1/2})f, g \bigr\rangle 
= \int_X \int_X p_t(x,y) f(x) g(y) \,\wrt \mu(x) \,\wrt \mu(y) 
\qquad\forall f, g \in L^2(X).
\]
Now, as in the proof of \eqref{SubordinateFormula ptk}, we obtain the kernels of the derivatives:
\[
p_{t,k}(x,y)
=(-1)^{  k+1} \frac{t^k}{ \sqrt{\pi}}\int_0^\infty\partial^{k+1}_t(\expe ^{-{t^2}/{4u}})h_u(x,y) \,\frac{\wrt u}{u^{1/2}} \,.
\]
From the assumption (i), we see that $p_t(x,y)\geq 0$ for all $x,y\in X$ and $t \in \R^+$. 
Finally, arguing as in the proof of  (a), we obtain (b).
\end{proof}

We now give similar estimates for complex Poisson semigroups.

\begin{thm}\label{complex-semigroup}
Assume that $L$ satisfies the assumptions (i) and (ii) of Theorem \ref{time-derivative}. 
Fix $\beta \in (0,\pi/4)$, and set $\gamma = (1-\tan^2\beta)^{1/2}$.
Then
\begin{enumerate}[{(a)}]
\item 
the Poisson semigroup $\{ P_t : t\in \R^+\}$ can be extended to a complex semigroup
$\{ P_z : z \in \bar\Gamma_\beta\}$. 
Moreover, if $z \in \bar\Gamma_\beta$ and $\tau \leq \Re z \leq 4 \tau$, then 
\[
| P_z f(x) |   \le 4\sqrt{2}\gamma^{-1} P_{\gamma \tau} (|f|)(x)
\qquad\forall x\in X .
\]

\item 
Assume that the heat operators $H_t$ have kernels $h_t(x,y)$. 
Then the Poisson operators $P_z$ have kernels $p_z$, and if $z \in \bar\Gamma_\beta$ and $\tau \leq \Re z \leq 4\tau$, then 
\[
| p_z (x,y) |    \le 4\sqrt{2}\gamma^{-1} p_{\gamma \tau}(x,y)
\qquad\forall x,y\in X .
\]
\end{enumerate}
\end{thm}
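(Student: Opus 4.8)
The plan is to extend the subordination formula \eqref{SubordinateFormula} to complex time and then reuse, almost verbatim, the scalar estimates behind Theorem \ref{time-derivative}. Since $L$ is positive and self-adjoint, $\exp(-zL^{1/2})$ is defined by the spectral theorem for all $z$ with $\Re z\ge 0$, is a contraction on $L^2(X)$ (because $|\expe^{-z\lambda}|\le 1$ when $\lambda\ge 0$ and $\Re z\ge 0$), and depends analytically on $z$ in $\{\Re z>0\}\supset\bar\Gamma_\beta$, forming a semigroup there; this already gives the extension $\{P_z:z\in\bar\Gamma_\beta\}$ claimed in (a). The real content is the integral representation of $P_z$. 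The scalar identity
\[
\expe^{-z\lambda}=\frac{1}{2\sqrt{\pi}}\int_0^\infty z\,\expe^{-z^2/4u}\expe^{-u\lambda^2}\,\frac{\wrt u}{u^{3/2}}
\]
holds for $z,\lambda\in\R^+$ (it is \eqref{SubordinateFormula} at the level of symbols), and for each fixed $\lambda\in\R^+$ both sides are analytic in $z\in\Gamma_{\pi/4}$, the integral converging absolutely there because $\Re(z^2)>0$; hence the identity persists on $\Gamma_{\pi/4}$. Integrating against the spectral resolution of $L$ — equivalently, observing that $u\mapsto\exp(-uL)f$ is Bochner integrable in $L^2(X)$ against the scalar $z\,\expe^{-z^2/4u}u^{-3/2}$ since $\|\exp(-uL)f\|_2\le\|f\|_2$ — yields, for $z\in\bar\Gamma_\beta$,
\[
P_zf=\frac{1}{2\sqrt{\pi}}\int_0^\infty z\,\expe^{-z^2/4u}\exp(-uL)f\,\frac{\wrt u}{u^{3/2}}.
\]

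For the estimate in (a), fix $z\in\bar\Gamma_\beta$ with $\tau\le\Re z\le 4\tau$. By assumption (ii), $|\exp(-uL)f|\le\exp(-uL)(|f|)$, so from the representation above, and since $|\expe^{-z^2/4u}|=\expe^{-\Re(z^2)/4u}$,
\[
|P_zf(x)|\le\frac{1}{2\sqrt{\pi}}\int_0^\infty |z|\,\expe^{-\Re(z^2)/4u}\exp(-uL)(|f|)(x)\,\frac{\wrt u}{u^{3/2}}\qquad\forall x\in X.
\]
Two elementary facts about the sector close the argument. First, $|\Im z|\le(\tan\beta)\,\Re z$, so $\Re(z^2)=(\Re z)^2-(\Im z)^2\ge(1-\tan^2\beta)(\Re z)^2=\gamma^2(\Re z)^2\ge(\gamma\tau)^2$, whence $\expe^{-\Re(z^2)/4u}\le\expe^{-(\gamma\tau)^2/4u}$. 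Second, $|z|\le(\sec\beta)\,\Re z<\sqrt{2}\,\Re z\le 4\sqrt{2}\,\tau$, since $\sec\beta<\sec(\pi/4)=\sqrt2$. Substituting these bounds and then applying \eqref{SubordinateFormula} to $P_{\gamma\tau}$,
\[
|P_zf(x)|\le\frac{4\sqrt{2}\,\tau}{2\sqrt{\pi}}\int_0^\infty \expe^{-(\gamma\tau)^2/4u}\exp(-uL)(|f|)(x)\,\frac{\wrt u}{u^{3/2}}=\frac{4\sqrt{2}}{\gamma}\,P_{\gamma\tau}(|f|)(x),
\]
which proves (a).

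Part (b) follows by copying the kernel argument from the proof of Theorem \ref{time-derivative}(b). Using $\|\exp(-uL)f\|_2\le\|f\|_2$ and $h_u\ge 0$ (a consequence of positivity preservation) one checks that $(x,y)\mapsto\int_0^\infty z\,\expe^{-z^2/4u}h_u(x,y)\,u^{-3/2}\,\wrt u$ is locally integrable on $X\times X$, so that $P_z$ has the kernel $p_z(x,y)=\frac{1}{2\sqrt\pi}\int_0^\infty z\,\expe^{-z^2/4u}h_u(x,y)\,u^{-3/2}\,\wrt u$, and $p_{\gamma\tau}$ is given by the corresponding real formula. Since $h_u(x,y)\ge 0$, the computation of part (a) with $h_u(x,y)$ in place of $\exp(-uL)(|f|)(x)$ gives $|p_z(x,y)|\le 4\sqrt2\,\gamma^{-1}p_{\gamma\tau}(x,y)$.

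The routine inequalities are easy; the step that needs genuine care is the first paragraph — justifying that the complex subordination integral converges, is analytic, defines a semigroup and coincides with $\exp(-zL^{1/2})$ from the spectral calculus. The only other thing to monitor is the pair of sharp sector bounds $\Re(z^2)\ge\gamma^2(\Re z)^2$ and $|z|<\sqrt2\,\Re z$ valid on $\bar\Gamma_\beta$, which is precisely where the hypothesis $\beta<\pi/4$ (equivalently $\gamma^2=1-\tan^2\beta>0$) enters.
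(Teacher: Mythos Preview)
Your proposal is correct and follows essentially the same route as the paper: analytically continue the subordination formula to the sector $\bar\Gamma_\beta$, use positivity preservation to pass to $\exp(-uL)(|f|)$, and then apply the two elementary sector inequalities $\Re(z^2)\ge(\gamma\tau)^2$ and $|z|\le 4\sqrt{2}\,\tau$ to recognise the right-hand side as $4\sqrt{2}\,\gamma^{-1}P_{\gamma\tau}(|f|)$. Your justification of the analytic extension via the spectral calculus is slightly more explicit than the paper's, and your part (b) matches the paper's ``similar and omitted'' treatment.
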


\begin{rem}
The number $4$ may be replaced by any other number greater than $1$, at the cost of changing the constant $4\sqrt{2}\gamma^{-1}$.
\end{rem}

\begin{proof}
Recall the subordination formula \eqref{SubordinateFormula}:
\[
\exp(-tL^{1/2})f(x)  =  \frac{1}{2\sqrt{\pi}}\int_0^\infty t\expe ^{-{t^2}/{4u}} \exp(-uL)f(x) \,\frac{\wrt u}{u^{3/2}}
\qquad\forall f\in L^2(X).
\]
The assumption (i) implies that the heat and Poisson semigroups extend analytically to $\Gamma_{\pi/2}$ as semigroups on $L^2(X)$.
Fix $\tau \in \R^+$ and $z \in \bar\Gamma_{\beta}$ such that $\tau \leq \Re z \leq 4 \tau$. 
By analytic continuation,
\[
\exp(-zL^{1/2})f(x) = \frac{1}{2\sqrt{\pi}}\int_0^\infty z\expe ^{-{z^2}/{4u}} \exp(-uL)f(x) \,\frac{\wrt u}{u^{3/2}  }  \,.
\]
This, along with the assumption (ii), yields
\begin{equation}\label{eq-pzxy}
|\exp(-zL^{1/2})f(x) | 
= \frac{1}{2\sqrt{\pi}}\int_0^\infty \left|z\expe ^{-{z^2}/{4u}}\right| \exp(-uL)(|f|)(x) \,\frac{\wrt u}{u^{3/2}}  \,.
\end{equation}
Write $z = t+is$, where $t,s \in \R$.
Then $|s| \leq t \tan \beta < t$ since $\beta< {\pi}/{4}$.
Therefore 
\[
|z|^2 = |t|^2 + |s|^2 \leq 32 \tau^2
\qquad\text{and}\qquad
\Re(z^2) = t^2 - s^2 \geq (1 - \tan^2\beta) t^2 \geq (\gamma \tau)^2.
\]

Hence
\[
\left| z \expe ^{-{z^2}/{4u}}\right|\leq \frac{4\sqrt{2}}{\gamma}  (\gamma \tau) \expe ^{-{(\gamma \tau)^2}/{4u}}.
\]
Inserting this into \eqref{eq-pzxy}, we obtain
\[
\begin{aligned}
|\exp(-zL^{1/2})f(x) | 
&\leq \frac{4\sqrt{2}}{\gamma} \frac{1}{2\sqrt{\pi}}\int_0^\infty (\gamma \tau) \expe ^{-{(\gamma \tau)^2}/{4u}} \exp(-uL)(|f|)(x) \,\frac{\wrt u}{u^{3/2}} \\
&=\frac{4\sqrt{2}}{\gamma}  P_{\gamma \tau} (|f|)(x),
\end{aligned}
\]
where we used \eqref{SubordinateFormula} in the last inequality, which proves (a).

The proof of (b) is similar and we omit the details.
\end{proof}

The argument of Theorem \ref{complex-semigroup} also proves Theorem \ref{time-derivative}, because the derivatives of an analytic function can be expressed via the Cauchy formula in terms of the function.

\begin{cor} 
Theorem \ref{complex-semigroup} implies Theorem \ref{time-derivative}.
\end{cor}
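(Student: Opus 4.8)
The plan is to deduce Theorem~\ref{time-derivative} from Theorem~\ref{complex-semigroup} by writing the time derivatives $P_{t,k}$ as Cauchy integrals of the analytically continued Poisson operators $P_z$ over small circles centred at $t$, and then applying, at every point of such a circle, the pointwise domination $|P_zf(x)|\le 4\sqrt{2}\,\gamma^{-1}P_{\gamma\tau}(|f|)(x)$ supplied by Theorem~\ref{complex-semigroup}(a). The analyticity that legitimises Cauchy's formula pointwise is already implicit in the proof of Theorem~\ref{complex-semigroup}: differentiating the analytically continued subordination integral \eqref{SubordinateFormula} under the integral sign, with the help of the exponential decay of $z\mapsto z\,\expe^{-z^{2}/(4u)}$, shows that $z\mapsto P_zf(x)$ is analytic on $\Gamma_\beta$ for each fixed $x$ and that $P_{t,k}f(x)=(-1)^k t^k\,\partial_t^k\bigl(P_tf(x)\bigr)$.

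Fix a positive integer $k$, a number $\theta\in(0,1)$, and $t\in\R^+$. For any $\beta\in(0,\pi/4)$ and any $r\in\bigl(0,\min\{3/5,\sin\beta\}\bigr)$ the closed disc $\{z:|z-t|\le rt\}$ lies in the open sector $\Gamma_\beta$, so Cauchy's formula on the circle $|z-t|=rt$ yields
\[
|P_{t,k}f(x)|\le\frac{k!}{r^k}\max_{|z-t|=rt}|P_zf(x)| \qquad\forall x\in X .
\]
On that circle $\Re z$ ranges over $[(1-r)t,(1+r)t]$, and since $r\le 3/5$ we have $(1+r)t\le 4(1-r)t$; hence for every $\tau$ with $(1+r)t/4\le\tau\le(1-r)t$ the hypothesis $\tau\le\Re z\le 4\tau$ of Theorem~\ref{complex-semigroup}(a) holds at every point $z$ of the circle, and we obtain
\[
|P_{t,k}f(x)|\le\frac{4\sqrt{2}\,k!}{r^k\gamma}\,P_{\gamma\tau}(|f|)(x), \qquad\text{where } \gamma=(1-\tan^2\beta)^{1/2}.
\]

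It remains to choose the parameters so that $\gamma\tau=\theta t$ with $\tau$ admissible. First pick $\gamma\in\bigl(\theta,\min\{4\theta,1\}\bigr)$, a nonempty interval; since $\beta\mapsto(1-\tan^2\beta)^{1/2}$ is a decreasing bijection of $(0,\pi/4)$ onto $(0,1)$ this fixes $\beta\in(0,\pi/4)$. Then pick $r>0$ smaller than each of $\sin\beta$, $3/5$, $1-\theta/\gamma$ and $4\theta/\gamma-1$ (the last two being positive because $\theta<\gamma<\min\{4\theta,1\}$), and set $\tau=\theta t/\gamma$; one checks $(1+r)t/4\le\tau\le(1-r)t$, so the displayed estimate gives $|P_{t,k}f(x)|\le 4\sqrt{2}\,k!\,r^{-k}\gamma^{-1}P_{\theta t}(|f|)(x)$, which is part (a). Part (b) is obtained by running the identical argument with $P_zf(x)$ replaced by the kernel $p_z(x,y)$: Theorem~\ref{complex-semigroup}(b) gives that $z\mapsto p_z(x,y)$ is analytic and that $|p_z(x,y)|\le 4\sqrt{2}\,\gamma^{-1}p_{\gamma\tau}(x,y)$ for admissible $\tau$, while $p_{t,k}(x,y)=(-1)^k t^k\,\partial_t^k p_t(x,y)$, so the same Cauchy estimate produces $|p_{t,k}(x,y)|\le 4\sqrt{2}\,k!\,r^{-k}\gamma^{-1}p_{\theta t}(x,y)$.

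The main obstacle is the (elementary but slightly fussy) bookkeeping in the previous paragraph: one needs the three requirements on $(\beta,r)$ — that the disc $|z-t|\le rt$ stay inside the sector ($r<\sin\beta$), that the real parts on the bounding circle have ratio at most $4$ ($r\le 3/5$), and that $\gamma\tau=\theta t$ be attainable with $\tau\in[(1+r)t/4,(1-r)t]$ — to be met simultaneously for every $\theta\in(0,1)$, which forces $\gamma$ (hence $\beta$) to be taken close to $0$ for small $\theta$ and close to $1$ for $\theta$ near $1$, with $r$ correspondingly small. The only other point, and it is routine, is the justification of differentiation under the integral sign in the subordination formula that underlies the pointwise use of Cauchy's formula.
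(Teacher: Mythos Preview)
Your proof is correct and follows essentially the same route as the paper's: express $P_{t,k}$ via the Cauchy integral formula over a small circle about $t$ contained in the sector $\Gamma_\beta$, and apply the pointwise bound of Theorem~\ref{complex-semigroup} at every point of the circle. The paper makes the specific choice $r=\sin\beta$ and $\tau=(1-\sin\beta)t$, fixing $\beta$ so that $\gamma(1-\sin\beta)=\theta$; your more flexible parameter selection (decoupling $r$ from $\sin\beta$ and allowing a range of $\gamma$) is if anything slightly more careful about the constraint $(1+r)\le 4(1-r)$ needed for the ``$\tau\le\Re z\le 4\tau$'' hypothesis.
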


\begin{proof} 
Take $\theta \in (0, 1)$.
To obtain the estimate of Theorem \ref{time-derivative}(a), namely
\[
| P_{t,k} f(x)| 
= | (t L^{1/2})^k \exp(-tL^{1/2}) f(x)| 
\le C_{\theta} P_{\theta t} (| f |)(x) 
\qquad\forall t \in \R^+,
\]
we choose $\beta \in (0,\pi/4)$ such that $(1 -\tan^2\beta)^{1/2} (1-\sin\beta) = \theta$, we let $\gamma$ be $(1-\tan^2\beta)^{1/2}$ and $\tau$ be $(1 - \sin\beta)t$, and we define $C$ to be the circle centred at $t$ with radius $r = t \sin \beta$.

From the Cauchy integral formula, 
\[
P_{t,k} f 
= (-t)^{k} \frac{\wrt ^k }{ dt^k} \exp(-tL^{1/2})f = \frac{(-t)^k k! }{ 2\pi i}  \int_C \frac{\exp(-zL^{1/2})f }{ (z-t)^{k+1} } \,\wrt z .
\]
Every point on the circle $C$ has the property that $\tau \leq \Re z \leq 4\tau$, so, from Theorem \ref{complex-semigroup},
\[
\left| P_{t,k} f (x) \right|
\leq  \frac{t^k k! }{ 2\pi }  \int_C \frac{| \exp(-zL^{1/2})f(x) | }{ r^{k+1} } \,\wrt z 
\leq \frac{t^k k! }{ r^{k} }  P_{\gamma \tau}(|f|)(x)   
= \frac{4 \sqrt{2} \,k!} {(\sin\beta)^{k} \gamma} P_{\theta t}(|f|)(x)   .
\]

This proves part (a) of Theorem \ref{time-derivative}.
The proof of part (b) is similar.
\end{proof} 

We can replace assumption (ii) by a weaker assumption: instead of assuming that the heat semigroup is positivity-preserving, we can assume that the heat semigroup is dominated by a positivity-preserving semigroup, and then obtain similar results to Theorems \ref{time-derivative} and \ref{complex-semigroup}. 
We state this in the next theorem.

Given a semigroup $\{G_t: t \in \R^+\}$ with generator $M$, we write $\{Q_t : t \in \R^+\}$ for the associated Poisson semigroup, that is, $Q_t = \exp(-tM^{1/2})$, and write $q_t$ for the associated kernel, if it exists.

\begin{thm}\label{domination-semigroup}
Assume that
\begin{enumerate}[{(i)}]
\item $L$ is a positive self-adjoint operator on $L^2(X)$;

\item the heat operators $H_t$ are dominated by the operators $G_t$ of a positivity-preserving semigroup $\{\exp(-tM): t \in \R^+\}$, in the sense that  $| H_t f  |  \le C G_t |f| $ for all $f \in L^2(X)$ and all $t \in \R^+$.
\end{enumerate} 
Fix $\beta \in (0,\pi/4)$.
Then
\begin{enumerate}[{(a)}]
\item for each positive integer $k$ and $\theta \in (0,1)$, for all $f\in L^2(X)$,
\[
|P_{t,k} f(x)| = | (t L^{1/2})^k \exp(-tL^{1/2}) f(x)|  \lesssim_{k,\theta} {Q}_{\theta t}(|f|)(x) 
\qquad \forall x\in X.
\]

\item the Poisson semigroup $\{ P_t : t\in \R^+\}$ can be extended to a complex semigroup
$\{ P_z : z \in \bar\Gamma_\beta\}$. 
Moreover there exists $\gamma=\gamma(\beta) \in (0,1)$ such that
\[
| P_z f(x) | = | \exp(-z L^{1/2}) f(x) | \le \sqrt{2}\gamma^{-1} {Q}_{\gamma t} (|f|)(x)
\qquad \forall x\in X ,
\]
where $t = \Re z$.
\end{enumerate}
Assume further that the heat operators $H_t$ and the dominating operators $G_t$  have kernels $h_t$ and $g_t$ in $L^1_{\loc}(X\times X)$. 

\begin{enumerate}[(a)]\setcounter{enumi}{2}
\item Then the operators $Q_t$, $P_t$ and $P_{t,k}$  have kernels $q_t$,  $p_t$ and  $p_{t,k}$  in $L^1_{\loc}(X\times X)$, and further, for each positive integer $k$ and $\theta \in (0,1)$, 
\[
| p_{t,k} (x,y) |    \lesssim_{k,\theta} {q}_{\theta t}(x,y)
\qquad\forall x,y\in X.
\]

\item 
For $\left| \arg z\right| \le \beta$ with any fixed $\beta < {\pi / 4}$, there exists $\gamma=\gamma(\beta) \in (0,1)$ such that
\[
| p_z (x,y) |    \le \sqrt{2}\gamma^{-1} g_{\gamma t}(x,y)
\qquad\forall x,y\in X ,
\]
where $t = \Re z$ and $p_z(x,y)$ is the kernel of $P_z$.
\end{enumerate}
\end{thm}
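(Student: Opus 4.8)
The four parts are obtained by re-running the proofs of Theorems~\ref{time-derivative} and~\ref{complex-semigroup}, the only substantive change being that positivity-preservation of $\{H_t\}$ — which entered those proofs through $|H_uf|\le H_u|f|$ together with $H_u|f|\ge 0$ — is replaced by the domination $|H_uf|\le CG_u|f|$, so that the majorising object becomes $\{G_t\}$, or rather its subordinated Poisson semigroup $\{Q_t\}$, instead of $\{H_t\}$ and $\{P_t\}$. The one preliminary observation needed is that $Q_t=\exp(-tM^{1/2})$ is, by Bochner subordination, given by
\[
Q_t=\frac{1}{2\sqrt\pi}\int_0^\infty t\,\expe^{-t^2/4u}\,G_u\,\frac{\wrt u}{u^{3/2}} ,
\]
and that, the weights $t\,\expe^{-t^2/4u}u^{-3/2}$ being positive, $\{Q_t\}$ inherits positivity-preservation from $\{G_t\}$; in particular $q_t\ge 0$ whenever $Q_t$ has a kernel.

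For~(a) one substitutes $|H_uf|\le CG_u|f|$ into the differentiated subordination formula \eqref{SubordinateFormula ptk} to obtain
\[
|P_{t,k}f(x)|\le\frac{C}{\sqrt\pi}\int_0^\infty\frac{t^k}{u^{1/2}}\bigl|\partial_t^{k+1}\expe^{-t^2/4u}\bigr|\,G_u(|f|)(x)\,\wrt u ;
\]
the elementary inequality \eqref{eq2-prop2.1} then reduces the right-hand side to a constant multiple of $\int_0^\infty t\,\expe^{-(\theta t)^2/4u}G_u(|f|)(x)\,\wrt u/u^{3/2}=2\sqrt\pi\,\theta^{-1}Q_{\theta t}(|f|)(x)$, exactly the computation ending the proof of Theorem~\ref{time-derivative}(a). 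For~(b), assumption~(i) makes the heat and Poisson semigroups analytic on $\Gamma_{\pi/2}$, and on $\bar\Gamma_\beta\subset\Gamma_{\pi/4}$ the subordination formula \eqref{SubordinateFormula} continues to represent $P_z$; the domination then gives $|P_zf(x)|\le\frac{C}{2\sqrt\pi}\int_0^\infty|z\,\expe^{-z^2/4u}|\,G_u(|f|)(x)\,\wrt u/u^{3/2}$. Writing $z=t+is$ with $|s|\le t\tan\beta$ one has $|z|\le t/\cos\beta$ and $\Re(z^2)\ge(1-\tan^2\beta)t^2=\gamma^2t^2$, hence
\[
\bigl|z\,\expe^{-z^2/4u}\bigr|=|z|\,\expe^{-\Re(z^2)/4u}\le\frac{1}{\gamma\cos\beta}\,(\gamma t)\,\expe^{-(\gamma t)^2/4u}\le\frac{\sqrt2}{\gamma}\,(\gamma t)\,\expe^{-(\gamma t)^2/4u},
\]
the last step because $\cos\beta>1/\sqrt2$ for $\beta<\pi/4$. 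Feeding this back in and using the subordination formula for $M$ yields $|P_zf(x)|\le C\sqrt2\,\gamma^{-1}Q_{\gamma t}(|f|)(x)$, the asserted estimate up to the constant $C$ from~(ii); the analytic extension of $\{P_t\}$ to $\bar\Gamma_\beta$ is as in Theorem~\ref{complex-semigroup}.

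For~(c) and~(d) one first upgrades operator domination to kernel domination: if $|H_uf|\le CG_u|f|$ for all $f\in L^2(X)$ and $H_u$, $G_u$ have kernels, then $|h_u(x,y)|\le Cg_u(x,y)$ for $(\mu\times\mu)$-almost every $(x,y)$, by testing against indicator functions and using $g_u\ge0$. Re-running the local-integrability argument from the proof of Theorem~\ref{time-derivative}(b) with $g_u$ in place of $h_u$ — legitimate since $g_u\ge0$, once $\{G_t\}$ is known to be uniformly bounded on $L^2(X)$ (as for a sub-Markovian semigroup) — shows that $q_t$ is the $L^1_{\loc}(X\times X)$ kernel of $Q_t$, and that
\[
p_t(x,y)=\frac{1}{2\sqrt\pi}\int_0^\infty t\,\expe^{-t^2/4u}\,h_u(x,y)\,\frac{\wrt u}{u^{3/2}}
\]
is the $L^1_{\loc}(X\times X)$ kernel of $P_t$, with $|p_t(x,y)|\le Cq_t(x,y)$. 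Differentiating under the integral produces $p_{t,k}$, and combining $|h_u|\le Cg_u$ with \eqref{eq2-prop2.1} as in~(a) gives $|p_{t,k}(x,y)|\lesssim_{k,\theta}q_{\theta t}(x,y)$. For~(d) one analytically continues the formula for $p_t$ to $z\in\bar\Gamma_\beta$ and re-uses the bound on $|z\,\expe^{-z^2/4u}|$ from~(b), obtaining $|p_z(x,y)|\le C\sqrt2\,\gamma^{-1}q_{\gamma t}(x,y)$, in accordance with~(b).

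The bulk of this is routine transcription of Theorems~\ref{time-derivative} and~\ref{complex-semigroup}. I expect the only real obstacles to be the two new points in~(c)--(d): the passage from domination of operators to almost-everywhere domination of kernels, and the verification that $q_t$, $p_t$, $p_{t,k}$ lie in $L^1_{\loc}(X\times X)$; the latter needs a bound on $\iint_{\Omega\times\Omega}g_u\,\wrt\mu\,\wrt\mu$ that is uniform in $u$ over bounded $\Omega$, which holds when $\{G_t\}$ is (sub-)Markovian — or otherwise uniformly bounded on $L^2(X)$ — a hypothesis implicit in the statement.
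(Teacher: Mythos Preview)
Your proposal is correct and follows exactly the approach of the paper, which simply says ``the proof is similar to those above'' and shows only the one-line modification for~(a): replace $|\exp(-uL)f|\le \exp(-uL)|f|$ by $|\exp(-uL)f|\le C\exp(-uM)|f|$ in the subordination formula and proceed as before. You have in fact supplied considerably more detail than the paper does---in particular the passage from operator to kernel domination and the local-integrability check in~(c)--(d), which the paper leaves entirely implicit---and your derivation in~(d) of a bound by $q_{\gamma t}$ rather than the $g_{\gamma t}$ printed in the statement is the natural outcome of the argument (this appears to be a typo in the paper, since~(b) and~(d) should be parallel).
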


\begin{proof}
The proof is similar to those above. 
We will only show the modification for the proof of (a), since the modifications for parts (b), (c), (d) are similar.

As in \eqref{SubordinateFormula ptk},
\begin{equation}\label{SubordinateFormula ptk2}
(tL^{1/2})^{k}\exp(-tL^{1/2})f 
= (-1)^k\frac{t^k}{4\sqrt{\pi}}\int_0^\infty\partial^{k+1}_t(\expe ^{-{t^2}/{4u}})\exp(-uL)f \,\frac{\wrt u}{\sqrt{u}}
\end{equation}
which implies that
\[
| (t L^{1/2})^{k} \exp(-tL^{1/2} ) f|  
\le   \frac{C t^k}{4\sqrt{\pi}}\int_0^\infty \left| \partial^{k+1}_t(\expe ^{-{t^2}/{4u}}) \right|  \exp(-uM) |f| \,\frac{\wrt u}{\sqrt{u}} .
\]
The rest of the proof proceeds as before, but with $\exp(-tM) |f| $ in place of $\exp(-tL) | f |$.
\end{proof}



We now consider the case where $\{H_t:t \in \R^+\}$ is a {symmetric} Markov semigroup, that is, $L$ is positive self-adjoint and $\{H_t:t \in \R^+\}$ extends continuously to a positivity-preserving contraction semigroup on $L^p(X)$ for all $p$ in $[1, \infty]$. 
It then follows from the subordination formula that the Poisson semigroup $\{P_t : t \in \R^+\}$ is also a Markov semigroup. 

Let $P^{*} f = \sup_{t > 0} | P_t f | $ for all $f \in L^p(X)$. 
The operator $P^{*}$ is bounded on $L^p(X)$ when $1 < p \le \infty$ ({{see \cite{Stein, Cowling}}}). 
The following result is a direct consequence of Theorems \ref{time-derivative} and \ref{complex-semigroup}.

\begin{thm}\label{maximal-estimate}
Assume that $\{H_t:t \in \R^+\}$ is a symmetric Markov semigroup. 
For each nonnegative integer $k$, set
\[
M^{*}_k f  = \sup_{z \in  \bar\Gamma_{\beta}}  \left| z^k \frac{\wrt ^k }{ dz^k} P_z  f \right|  .
\] 
Then the operators $M^{*}_k$ are bounded on $L^p(X)$ for all $p$ in $(1, \infty]$. 
\end{thm}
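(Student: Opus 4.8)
The plan is to dominate $M^{*}_{k}$ pointwise by the maximal operator $P^{*}g=\sup_{t\in\R^{+}}P_{t}g$ of the real‑time Poisson semigroup, and then to invoke the fact, recalled just before the statement, that $P^{*}$ is bounded on $L^{p}(X)$ for every $p\in(1,\infty]$ (the maximal theorem for symmetric Markov semigroups; see \cite{Stein,Cowling}). Since $\||f|\|_{p}=\|f\|_{p}$, the theorem follows once we establish the pointwise bound
\[
M^{*}_{k}f(x)=\sup_{z\in\bar\Gamma_{\beta}}\bigl|P_{z,k}f(x)\bigr|\le C_{k,\beta}\,P^{*}(|f|)(x)\qquad\text{for a.e.\ }x\in X,
\]
where $P_{z,k}=(zL^{1/2})^{k}P_{z}=(-1)^{k}z^{k}\,\wrt^{k}P_{z}/\wrt z^{k}$ as in Section~\ref{s1}, so that indeed $M^{*}_{k}f=\sup_{z\in\bar\Gamma_{\beta}}|P_{z,k}f|$. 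One proves this for $f\in L^{2}(X)\cap L^{p}(X)$ and then passes to general $f$ by density when $p<\infty$; for $p=\infty$ one works directly with the $L^{\infty}$‑extensions of the heat and Poisson operators, which are contractions.

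The pointwise bound is obtained by the same Cauchy‑formula device used in the corollary that deduces Theorem~\ref{time-derivative} from Theorem~\ref{complex-semigroup}. Fix $z\in\bar\Gamma_{\beta}$ with $z\neq0$; the value $z=0$ contributes $0$ when $k\ge1$, while for $k=0$ one has $|P_{0}f|=|f|\le P^{*}(|f|)$ a.e., since $P_{t}(|f|)\to|f|$ a.e.\ as $t\to0^{+}$. By assumption~(i) of Theorem~\ref{time-derivative} the Poisson semigroup extends analytically to $\Gamma_{\pi/2}$, so $w\mapsto P_{w}f(x)$ is holomorphic near $z$. Pick $\beta'\in(\beta,\pi/4)$. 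An elementary computation with $z=t+is$, $|s|\le t\tan\beta$, produces $\epsilon=\epsilon(\beta,\beta')\in(0,1)$ — for instance $\epsilon=\min\{3(\cos\beta)/5,\ \sin(\beta'-\beta)\}$ — such that the closed disc $D=\{w:|w-z|\le r\}$, with $r:=\epsilon|z|$, satisfies $D\subset\bar\Gamma_{\beta'}$ and $\tau\le\Re w\le4\tau$ for every $w\in D$, where $\tau:=\Re z-\epsilon|z|>0$. Applying Theorem~\ref{complex-semigroup}(a) with the angle $\beta'$ and $\gamma':=(1-\tan^{2}\beta')^{1/2}$ then gives, for every $w\in D$,
\[
|P_{w}f(x)|\le 4\sqrt{2}\,(\gamma')^{-1}\,P_{\gamma'\tau}(|f|)(x)\le 4\sqrt{2}\,(\gamma')^{-1}\,P^{*}(|f|)(x)\qquad\text{a.e.}
\]

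Next, the Cauchy integral formula on $\partial D$ reads
\[
\frac{\wrt^{k}}{\wrt z^{k}}P_{z}f(x)=\frac{k!}{2\pi i}\int_{\partial D}\frac{P_{w}f(x)}{(w-z)^{k+1}}\,\wrt w ,
\]
so, bounding the integrand by $r^{-k-1}\sup_{w\in\partial D}|P_{w}f(x)|$ and the length of $\partial D$ by $2\pi r$, and using $|z|^{k}=r^{k}\epsilon^{-k}$,
\[
|P_{z,k}f(x)|=|z|^{k}\,\Bigl|\tfrac{\wrt^{k}}{\wrt z^{k}}P_{z}f(x)\Bigr|\le\frac{k!}{\epsilon^{k}}\sup_{w\in\partial D}|P_{w}f(x)|\le\frac{4\sqrt{2}\,k!}{\epsilon^{k}\gamma'}\,P^{*}(|f|)(x)\qquad\text{a.e.}
\]
Since $z\mapsto P_{z}f$ is continuous from $\bar\Gamma_{\beta}$ into $L^{2}(X)$ — it is the boundary trace of a bounded holomorphic $\mathcal{L}(L^{2}(X))$‑valued function — after choosing a suitable representative the supremum defining $M^{*}_{k}f$ may be computed over a countable dense subset of $\bar\Gamma_{\beta}$; hence $M^{*}_{k}f$ is measurable and the last display upgrades to $M^{*}_{k}f\le 4\sqrt{2}\,k!\,(\epsilon^{k}\gamma')^{-1}P^{*}(|f|)$ a.e. Taking $L^{p}$ norms and invoking the boundedness of $P^{*}$ completes the argument.

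The only genuine obstacle is the geometric bookkeeping in the middle step: the Cauchy circle must be taken large enough to control the factor $k!/\epsilon^{k}$, yet small enough that it remains inside a subsector $\bar\Gamma_{\beta'}$ with $\beta'<\pi/4$ and that its real part stays within a factor‑four window, so that Theorem~\ref{complex-semigroup} applies uniformly along it. A secondary point is the measurability of $M^{*}_{k}f$ and the upgrade from ``for each $z$, a.e.\ $x$'' to ``a.e.\ $x$, for all $z$'', which is dispatched by the strong continuity of $z\mapsto P_{z}$. Neither point carries any analytic depth: the substance of the theorem is already contained in Theorem~\ref{complex-semigroup} together with the classical maximal ergodic theorem for symmetric Markov semigroups.
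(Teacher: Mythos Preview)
Your proof is correct and follows essentially the same route as the paper. The paper's one-line proof cites the pointwise estimates of Theorem~\ref{time-derivative}(a) and Theorem~\ref{complex-semigroup}(a) together with the $L^p$ boundedness of $P^{*}$; you make this explicit by running the Cauchy-integral argument of the Corollary (which the paper uses to deduce Theorem~\ref{time-derivative} from Theorem~\ref{complex-semigroup}) at complex centres $z\in\bar\Gamma_\beta$ rather than real ones, thereby dominating $|P_{z,k}f|$ pointwise by $P^{*}(|f|)$, and your geometric bookkeeping for the choice of $\epsilon$ and $\beta'$ is the natural elaboration of what the paper leaves implicit.
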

\begin{proof}
Theorem \ref{maximal-estimate} follows from the pointwise estimates in Theorem \ref{time-derivative}(a) and 
Theorem \ref{complex-semigroup}(a) together with the $L^p$ boundedness of $P^{*}$.
\end{proof}
\begin{rem}
A standard example of a symmetric Markov semigroup is $\{\exp(-t\Delta):t \in \R^+\}$, where $\Delta$ is (minus) the Laplace--Beltrami operator on a complete Riemannian manifold $X$.
\end{rem}




\section{ L\'evy distributions and subordinated semigroups}

\subsection{L\'evy distributions}
We shall now study the subordinated semigroup $\{\exp(-tL^{\alpha}): t \in \R^+\}$ when $\alpha \in(0, 1)$.
While the Poisson semigroup in the special case when $\alpha = 1/2$ has an explicit subordination formula, the general case  leads us to  the strictly stable L\'evy distribution with stability parameter $\alpha$. For information about these see, for example,  Sato \cite[\S14]{Sato} or Zolotarev \cite{Zolotarev}.
In what follows, many objects depend on $\alpha$, but we do not indicate this explicitly.

A Laplace transform and contour integration argument shows that
\[
\expe ^{-u s^\alpha} = \int_0^\infty p_u(t) \, \expe ^{-ts} \,\wrt t
\qquad\forall s, u  \in \R^+
\]
if and only if
\[
\begin{aligned}
p_u(t)
&= \frac{1}{2\pi i} \int_{0-i\infty}^{0+i\infty} \expe ^{-u z^\alpha + tz} \,\wrt z  \\
&= \frac{1}{2\pi } \int_{-\infty}^{+\infty} \expe ^{-u (iy)^\alpha + ity} \,\wrt y  \\
&= \mathcal{F}^{-1} m_u(t) 
\qquad\forall t \in \R^+,
\end{aligned}
\]
where $\mathcal{F}$ denotes the Fourier transformation and
\[
m_u(y) = \expe ^{-u (iy)^\alpha} = \expe ^{-u[\cos(\pi\alpha/2) + i \sin(\pi\alpha/2)\operatorname{sgn}(y)] \, |y|^\alpha}
\qquad\forall y \in  \R.
\]
Since $m_u$ extends continuously and boundedly to the lower half-plane $\C_-$ of complex numbers with nonpositive real part, $ \mathcal{F}^{-1} m_u(t) $ vanishes when $t<0$.
It is therefore consistent to suppose that $p_u$ is defined on $\R$ and vanishes on $\R^-$.
Once we do this, we may view $m_u$ as $\mathcal{F} p_u$, or its analytic extension to the half-plane as a version of the Laplace transform of $p_u$ that has been rotated in the complex plane.

The function $m_u$ vanishes at infinity faster than any polynomial, and is infinitely differentiable away from $0$; its derivatives also vanish faster than any polynomial at infinity.
However, it is not smooth at $0$.
Indeed, take a compactly supported bump function $\phi$ that takes the value $1$ near $0$, and write
\[
m_u(y) = (1 - \phi(y)) m_u(y) + \phi(y) + \sum_{n=1}^\infty \phi(y) \,  \frac{(-u)^n}{n!} \,\omega(y)^n \, |y|^{n\alpha},
\]
where $\omega(y) = [\cos(\pi\alpha/2) + i \sin(\pi\alpha/2)\operatorname{sgn}(y)]$.
The first two terms are smooth, and the least smooth of the other terms is when $n = 1$.
This implies that $p_u$ is smooth but $p_u(t) \eqsim t^{-1-\alpha}$ when $t \to \infty$.
The derivatives of $p_u$ behave better:   $p'_u(t) \eqsim t^{-2-\alpha}$, $p''_u(t) \eqsim t^{-3-\alpha}$, and so on when $t \to \infty$.
We summarise this in a family of inequalities: 
\begin{equation}\label{eq:p-estimates}
\begin{aligned}
| p_1(t) | \lesssim (1+t) ^{-1-\alpha} \qquad\forall t \in \R^+ \\
| t \, p'_1(t) | \lesssim (1+t) ^{-1-\alpha} \qquad\forall t \in \R^+  \\
| t^2 \, p''_1(t) | \lesssim (1+t) ^{-1-\alpha} \qquad\forall t \in \R^+  \\
\end{aligned}
\end{equation}
and so on; here the implicit constants depend only on the order of the derivative and $\alpha$.

It is important for us that $p_u$ is the probability density function for the L\'evy stable distribution, and hence $p_u$ is nonnegative and has total mass $1$.
Finally, the different $p_u$ form a semigroup and are related to each other by a change of variable:
\[
p_u * p_v = p_{u+v}
\qquad\text{and}\qquad
p_u(t) = u^{-1/\alpha} \, p_1(u^{-1/\alpha}t )
\]
for $u, v \in \R^+$.

We now discuss analytic continuation; to facilitate this, we write $H^\infty(\C^-)$ for the Banach space of bounded holomorphic functions on the lower complex half plane (with the topology of uniform convergence on compacta in $\C^-$).
The functions $m_u$ and $p_s$ admit analytic continuations in several different ways.
Indeed,
\[
m_u(y) = \expe ^{-u (iy)^\alpha}
\qquad\forall y \in \R,
\]
and this function extends to the function $w \mapsto \expe ^{-u (iw)^\alpha}$ in $H^\infty(\C^-)$, which implies that the tempered distribution $\mathcal{F}^{-1}m_u$ is supported in $[0, +\infty)$, by the Paley--Wiener theorem.
But it is also possible to continue $m_u(y)$ analytically in $u$; indeed, for $w$ in the lower half plane,
\[
|\arg((iw)^\alpha)| < \frac{\pi\alpha}{2} \,,
\]
so that when moreover $|\arg(u)| < \pi(1-\alpha)/2$,
\[
|\arg( u (iw)^\alpha )| < \frac{\pi}{2}
\]
and $w \mapsto \expe ^{-u (iw)^\alpha}$ is still bounded and analytic in the lower half plane.
Thus $u \mapsto m_u$ may be extended to a holomorphic map of the cone $\Gamma_{ \pi(1-\alpha)/2}$ into the space $H^\infty(\C^-)$.

By using the inverse Fourier transformation and the Paley--Wiener theorem, we deduce that $u \mapsto p_u$ may be extended to a holomorphic map of the cone $\Gamma_{ \pi(1-\alpha)/2}$ to the space of tempered distributions on $\R$ which are supported in $[0, +\infty)$.
Then, by repeating the analysis described above of decay at infinity and the discontinuity at $0$ of $m_u$, considered as a function of $\R$, we may conclude that each $p_u$ is in fact given by integration against a smooth function which is integrable at infinity; finally, we may observe that $p_u(t)$ may be viewed as an analytic function in both variables $t$ and $u$.
Of course, the analytically continued functions need not be nonnegative.



\subsection{Spectral theory}\label{subsec:spectral-theory}
Let $X$ be a set, equipped with a positive measure $\mu$; suppose that $X$ is $\sigma$-finite.
Denote the corresponding Lebesgue space by $L^p(X)$.
If $L$ is a positive operator on $L^2(X)$, such that the operator $\exp(-tL)$, defined initially on $L^2(X)$ by spectral theory, extends continuously to a contraction operator on $L^p(X)$. for all $p \in [1, \infty)$, then we may also write
\[
\exp(-u s^\alpha L^\alpha) = \int_0^\infty p_u(t) \, \exp(-tsL) \,\wrt t
\qquad\forall s \in \R^+
\]
by spectral theory, and then
\[
\opnorm{ \exp(-u s^\alpha L^\alpha)  }_p
\leq \int_0^\infty p_u(t) \, \opnorm{ \exp(-tsL)  }_p \,\wrt t
\leq \int_0^\infty p_u(t) \,\wrt t
\leq 1,
\]
where $\opnorm{ \cdot  }_p$ denotes the operator norm on $L^p(X)$.
Hence the operator $\exp(s^\alpha L^\alpha)$ also extends continuously from $L^2(X)$ to $L^p(X)$ for all $p \in [1, \infty)$.

The operators $\exp(-tL)$ form a symmetric  Markov  semigroup, of the sort that was treated by Stein \cite{Stein} and Cowling \cite{Cowling}.
In particular, these authors considered averaging operators $A^s$, defined by
\[
A^s =  \frac{1}{s} \, \int_0^{s} \exp(-tL) \,\wrt t,
\]
which behave better than the original operator $\exp(-tL)$, and may be treated using the Hopf--Dunford--Schwartz  maximal ergodic theorem (see \cite[Section VIII.6.4]{DS}).
More precisely, for $f \in L^p(X)$, where $1 \leq p \leq \infty$, we write $A^*f$ for the maximal function $\sup_{s \in\R^+} | A^s f |$. 

\begin{thm}
Let $A^*$ be as just defined.
If $p=1$, then
\[
\mu \{ x \in X : A^*f(x) > \lambda \} \leq 2 \,\frac{\Vert f \Vert_1}{\lambda}
\qquad\forall \lambda \in \R^+ \quad\forall f \in L^1(X) ,
\]
while if $1 < p \leq \infty$, then
\[
\| A^*f \|_p \leq 2 \left( \frac{p}{p-1}\right)^{1/p} \, \| f \|_p  
\qquad \forall f \in L^p(X).
\] 
\end{thm}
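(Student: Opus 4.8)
The statement is the classical Hopf--Dunford--Schwartz maximal ergodic theorem, applied to the averaging operators $A^s$ of a symmetric Markov semigroup. I would not attempt to reprove the ergodic theorem from scratch; rather I would reduce the continuous-parameter averages to the discrete setting where the Hopf--Dunford--Schwartz theorem (see \cite[Section VIII.6.4]{DS}) applies directly.

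\medskip
\textbf{Plan.} The key observation is that for a symmetric Markov semigroup, each operator $\exp(-tL)$ is a linear contraction on $L^1(X)$ and on $L^\infty(X)$ simultaneously (it is positivity-preserving and $\exp(-tL)1 \leq 1$, with the analogous $L^1$ bound by self-adjointness/duality). Hence the averaging operators $A^s = s^{-1}\int_0^s \exp(-tL)\,\wrt t$ inherit these bounds: $\opnorm{A^s}_1 \leq 1$ and $\opnorm{A^s}_\infty \leq 1$. First I would record these uniform contraction properties. Next, the continuous-time maximal function is controlled by a discretised one: by the semigroup property and monotonicity of the integral, for $s$ in a dyadic block one can bound $|A^s f|$ by a fixed multiple (in fact, the factor $2$ in the statement comes from here) of $\sup_n |A^{s_n} f|$ for a suitable countable family $\{s_n\}$, or more cleanly one writes the Cesàro average as a convex combination of iterates of a single contraction obtained by resolvent-type discretisation. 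The cleanest route is: fix the operator $T = A^\delta$ (or use the resolvent $(I+\epsilon L)^{-1}$) for small $\delta$, note that $\frac1n\sum_{k=0}^{n-1}T^k$ is dominated in the Hopf--Dunford--Schwartz framework, apply their theorem to get the weak $(1,1)$ bound with constant $2$ and the $L^p$ bound with constant $2(p/(p-1))^{1/p}$, and then let $\delta \to 0$ to pass from the discrete ergodic averages to the continuous averages $A^s$, using continuity of $t \mapsto \exp(-tL)f$ in $L^p$ for $1 \leq p < \infty$ and a density argument for $p = \infty$ (or just the $L^p$ statement for $p<\infty$ and interpolation/duality for $\infty$).

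\medskip
\textbf{Main obstacle.} The delicate point is the passage from discrete ergodic averages to the continuous-parameter maximal function $A^*$ \emph{without losing the sharp constants} $2$ and $2(p/(p-1))^{1/p}$. A crude discretisation of the interval $[0,s]$ into pieces of length $\delta$ introduces an extra factor that would have to be absorbed into the limit $\delta \to 0$; one must argue that the comparison constant between $A^s f$ and the nearest discrete average tends to $1$ as the mesh shrinks (this uses strong continuity of the semigroup on $L^p$, $p < \infty$, and for $p = \infty$ a weak-$*$ argument together with the fact that $A^s f$ is already continuous in $s$ for $f \in L^\infty$). Once that is handled, the weak type $(1,1)$ estimate follows from the Hopf--Dunford--Schwartz maximal ergodic inequality applied to the $L^1$--$L^\infty$ contraction $T$, and the strong $L^p$ bound follows either from their strong maximal ergodic theorem or by interpolating the weak $(1,1)$ bound against the trivial $L^\infty$ bound $\opnorm{A^*}_\infty \leq 1$ via the Marcinkiewicz theorem, which yields exactly a constant of the form $2(p/(p-1))^{1/p}$. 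I would present this as a short deduction citing \cite{DS} and \cite{Stein}, since the quantitative constants are precisely those recorded in that literature.
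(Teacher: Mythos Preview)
The paper does not prove this theorem at all: it simply states it and attributes it to the Hopf--Dunford--Schwartz maximal ergodic theorem, with the citation \cite[Section VIII.6.4]{DS} given in the sentence immediately preceding the statement. Your final sentence---``present this as a short deduction citing \cite{DS} and \cite{Stein}''---is therefore exactly what the paper does, and nothing more is required.

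Your sketched reduction (discretise via $T=A^\delta$, apply the discrete Hopf--Dunford--Schwartz inequality, let $\delta\to 0$) is a legitimate route to the continuous-parameter statement, but note two points. First, Dunford--Schwartz already treat the continuous-parameter case directly in that section, so the reduction is unnecessary if one is willing to cite the result. Second, your remark that Marcinkiewicz interpolation between the weak $(1,1)$ bound with constant $2$ and the trivial $L^\infty$ bound ``yields exactly a constant of the form $2(p/(p-1))^{1/p}$'' is not accurate: the standard Marcinkiewicz theorem produces a worse constant depending on $p$. The sharp constant $2(p/(p-1))^{1/p}$ in Dunford--Schwartz comes from a direct argument (integrating the weak-type distributional inequality against $p\lambda^{p-1}$ after splitting $f$ at level $\lambda$), not from abstract interpolation. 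If you were actually writing out the proof rather than citing it, you would need that direct argument to recover the stated constant.
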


\subsection{The maximal function }
We have already and will continue to consider the suprema of uncountable familes of measurable functions (modulo identification almost everywhere), and at first sight, these might lead to serious problems with measurability. 
Fortunately, some results of Dunford and Schwartz \cite[IV.12]{DS} resolve this issue.
To state these with appropriate care, we need some definitions.

We consider the space $\mathcal{M}(X)$ of measurable real-valued functions, each of which is defined on a subset of $X$ of full measure.
We say that two such functions $f$ and $g$ are equivalent when the set $\{ x \in X : f(x) = g(x) \}$ is of full measure in $X$, and write $M(X)$ for the space of equivalence classes of functions in $\mathcal{M}(X)$ modulo this equivalence; temporarily we denote the equivalence class of $f$ in $\mathcal{M}(X)$ by $\dot f$.
Thus $M(X)$ may be ordered: $\dot f \leq \dot g$ if and only if the set $\{ x \in X : f(x) \leq g(x) \}$ is of full measure in $X$ for one, and hence every, pair of representatives $f$ and $g$ of the equivalence classes $\dot f$ and $\dot g$.
Now we may define upper bounds and least upper bounds for sets of equivalence classes in the obvious way; we write $\lub$ for a least upper bound, if it exists.

\begin{lem}
Let $X$ be a set, equipped with a positive measure $\mu$; suppose that $X$ is $\sigma$-finite.
Let $\mathcal{F}$ be a family of functions $f$ in $\mathcal{M}(X)$, and $\dot{\mathcal{F}}$ denote the corresponding family of equivalence classes $\dot f$ in $M(X)$.
Suppose that $\dot{\mathcal{F}}$ is bounded above in $M(X)$ by $\dot g$.
Then $\lub \dot{\mathcal{F}}$ exists in $M(X)$, and is the equivalence class of $\sup\{ f : f \in \mathcal{F}_0 \} $, for a suitable countable subfamily $\mathcal{F}_0$ of $\mathcal{F}$.
If $1 \leq p \leq \infty$ and $g \in L^p(X)$, then $\lub \dot{\mathcal{F}} \in L^p(X)$ too.
The corresponding result also holds if $\dot g$ is in a Lorentz space on $X$. 
\end{lem}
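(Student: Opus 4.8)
The plan is to cut the uncountable family down to a countable, upward-directed one on which the pointwise supremum is manifestly measurable, and then to identify that supremum with the least upper bound. First I would make two harmless reductions. Replacing each $f\in\mathcal{F}$ by $\min(f,g)$ alters neither the set of upper bounds of $\dot{\mathcal{F}}$ nor $\lub\dot{\mathcal{F}}$ (since already $\dot f\le\dot g$), so I may assume $f\le g$ for every $f\in\mathcal{F}$. Next, enlarging $\mathcal{F}$ to the family $\widetilde{\mathcal{F}}$ of all finite pointwise maxima $\max(f_1,\dots,f_n)$ of its members again leaves the upper bounds, hence the lub, unchanged, while making $\widetilde{\mathcal{F}}$ directed upward. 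Using $\sigma$-finiteness, I fix a strictly positive $w$ with $\int_X w\,\wrt\mu=1$ and set $\nu=w\,\mu$, a probability measure with the same null sets as $\mu$.

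I then introduce the functional $\Phi(f)=\int_X \arctan(f)\,\wrt\nu$, which on $\widetilde{\mathcal{F}}$ is well defined and bounded above by $\pi/2$. Put $S=\sup\{\Phi(f):f\in\widetilde{\mathcal{F}}\}<\infty$, choose $g_n\in\widetilde{\mathcal{F}}$ with $\Phi(g_n)\to S$, and set $f_n=\max(g_1,\dots,g_n)\in\widetilde{\mathcal{F}}$, so that $f_n$ increases pointwise and $\Phi(g_n)\le\Phi(f_n)\le S$, whence $\Phi(f_n)\to S$. Define $h=\sup_n f_n$: this is measurable, satisfies $h\le g$, is finite a.e. (it dominates $g_1$ and is dominated by $g$), and by dominated convergence $\Phi(h)=S$. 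Unwinding the reductions, $h$ is a.e. equal to $\sup\{f:f\in\mathcal{F}_0\}$ for the countable subfamily $\mathcal{F}_0=\{g_1,g_2,\dots\}\subset\mathcal{F}$, as claimed in the statement.

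It remains to verify $\dot h=\lub\dot{\mathcal{F}}$. Given any $f\in\mathcal{F}$, the functions $\max(f,f_n)$ lie in $\widetilde{\mathcal{F}}$ and increase to $\max(f,h)$, so dominated convergence and the definition of $S$ give $\Phi(\max(f,h))\le S=\Phi(h)$; since $\max(f,h)\ge h$ and $\arctan$ is strictly increasing, the nonnegative function $\arctan(\max(f,h))-\arctan(h)$ has vanishing $\nu$-integral, hence vanishes $\nu$-a.e., i.e. $f\le h$ $\mu$-a.e. Thus $\dot h$ is an upper bound for $\dot{\mathcal{F}}$; and if $\dot k$ is any upper bound then each $g_n\le k$ a.e., so $h=\sup_n f_n\le k$ a.e., showing $\dot h$ is the \emph{least} upper bound. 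Finally, the $L^p$ and Lorentz-space assertions are the easy part: in the applications the members of $\mathcal{F}$ are nonnegative, so $0\le h\le g$ a.e., and then $h$ inherits membership in $L^p(X)$, or in any Lorentz space on $X$, from $g$ by monotonicity of the norm (equivalently, of the decreasing rearrangement).

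The only genuine subtlety is the measurability point flagged before the lemma — an uncountable pointwise supremum need not be measurable — and the whole argument is arranged precisely to bypass it by producing a countable upward-directed cofinal sequence. The one step that really requires care is checking that this countable sequence actually \emph{attains} the least upper bound, and that is exactly what the comparison of $\Phi(\max(f,f_n))$ with $S$ delivers; the strict monotonicity of $\arctan$ together with $\nu\sim\mu$ is what turns an integral inequality into the pointwise domination $f\le h$.
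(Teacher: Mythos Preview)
Your argument is correct. The paper does not prove this lemma at all --- it attributes the result to Dunford and Schwartz \cite[IV.12]{DS} and moves on --- so you have supplied a self-contained proof where the paper only cites. Your approach (pass to an equivalent probability measure via $\sigma$-finiteness, maximize the bounded strictly monotone functional $\Phi(f)=\int\arctan(f)\,\wrt\nu$ over the directed family of finite maxima, extract a maximizing increasing sequence, and use strict monotonicity of $\arctan$ together with $\nu\sim\mu$ to upgrade the integral equality $\Phi(\max(f,h))=\Phi(h)$ to the pointwise domination $f\le h$ a.e.) is the standard route to the essential-supremum lemma, and every step checks out.

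Two minor points. First, you write $\mathcal{F}_0=\{g_1,g_2,\dots\}\subset\mathcal{F}$, but the $g_n$ live in $\widetilde{\mathcal{F}}$, not in $\mathcal{F}$; the honest $\mathcal{F}_0$ is the countable set of elements of $\mathcal{F}$ that occur as constituents of the finite maxima $g_n$, which of course has the same pointwise supremum. You signal ``unwinding the reductions'' but should actually do it. Second, your caveat on the $L^p$/Lorentz assertion is well placed: as literally stated, that part of the lemma is false without a lower bound (take $X=(0,1]$, $\mathcal{F}=\{-1/x\}$, $g=0\in L^1$). The paper's applications are all to families of the form $\{|T_u f|:u\in\Gamma\}$, so nonnegativity is available there, and your restriction to that case is the right reading.
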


In light of the lemma, we may write $\sup\{f: f \in \dot{\mathcal{F}}\}$ in place of $\lub \dot{\mathcal{F}}$.
We now revert to the standard practice of not distinguishing between functions and equivalence classes of functions.

Now we generalise Stein \cite[Section III.3]{Stein}.
Suppose that $X$, $\mu$, and $L$ are as in Subsection \ref{subsec:spectral-theory}.
For a continuously differentiable function $h : \R \to \C$ that vanishes on $(-\infty, 0]$, we write $\tilde h$ its Laplace transform, and define the corresponding operator $\tilde h(L)$ as follows:
\[
\tilde h(L) = \int_0^\infty h(t) \, \exp(-tL) \,\wrt t.
\]

We now give the main result of this subsection.
\begin{thm}
Suppose that $C$ is a constant and the continuously differentiable function $h: [0,+\infty) \to \C$ satisfies the conditions
\[
\lim_{t \to 0+} |t\,h(t)|  = \lim_{t \to +\infty} |t\,h(t)| = 0
\]
and
\[
\int_0^\infty \,| t\,h'(t)| \,\wrt t \leq C.
\]
Then for all $f \in L^p(X)$,  
\[
| \tilde h(L) f | \leq C \,A^* f.
\]
Consequently , if $\mathcal{H}_C$ denotes the family of functions $h$ satisfying these conditions, then for all $f \in L^p(X)$,
\[
\sup\{  |\tilde h(L)f| : h \in \mathcal{H} \} \leq C \, A^*f.
\]
\end{thm}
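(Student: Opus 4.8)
The plan is to rewrite $\tilde h(L)$ as a weighted superposition of the ergodic averages $A^t$ by means of an integration by parts, after which the pointwise bound by $A^*f$ is immediate from the hypothesis $\int_0^\infty |t\,h'(t)|\,\wrt t \le C$. The key algebraic fact is that, for $f \in L^p(X)$, the $L^p(X)$-valued function $t \mapsto t\,A^t f = \int_0^t \exp(-sL)f\,\wrt s$ is continuously differentiable on $[0,\infty)$ with derivative $\exp(-tL)f$ and with value $0$ at $t = 0$; this uses only the strong continuity of the semigroup on $L^p(X)$ (for $p = \infty$ one reads these statements in the weak-$*$ sense, which suffices since the conclusion is a pointwise inequality). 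Hence, for $0 < a < b$,
\[
\int_a^b h(t)\exp(-tL)f\,\wrt t = \Bigl[ h(t)\,t\,A^t f \Bigr]_{t=a}^{t=b} - \int_a^b t\,h'(t)\,A^t f\,\wrt t .
\]

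Next I would let $a \to 0+$ and $b \to +\infty$. Since $\exp(-tL)$ is a contraction on $L^p(X)$ we have $\|A^t f\|_p \le \|f\|_p$, so the boundary term is bounded in $L^p(X)$-norm by $|t\,h(t)|\,\|f\|_p$, which tends to $0$ at both endpoints by the first hypothesis on $h$; and $\int_0^\infty \|t\,h'(t)\,A^t f\|_p\,\wrt t \le \|f\|_p \int_0^\infty |t\,h'(t)|\,\wrt t \le C\|f\|_p < \infty$. Thus the defining (improper) integral for $\tilde h(L)f$ converges in $L^p(X)$ and
\[
\tilde h(L)f = -\int_0^\infty t\,h'(t)\,A^t f\,\wrt t ,
\]
the integral on the right being absolutely convergent as an $L^p(X)$-valued Bochner integral. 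Such an integral agrees almost everywhere with the pointwise integral of the scalar integrands, so for almost every $x \in X$,
\[
|\tilde h(L)f(x)| \le \int_0^\infty |t\,h'(t)|\,|A^t f(x)|\,\wrt t \le \Bigl( \sup_{t \in \R^+} |A^t f(x)| \Bigr) \int_0^\infty |t\,h'(t)|\,\wrt t \le C\,A^* f(x),
\]
where the identification of $\sup_{t\in\R^+}|A^tf(x)|$ with the measurable function $A^*f$ is provided by the preceding Lemma, using that $t \mapsto A^t f(x)$ is continuous for a.e.\ $x$.

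For the last statement of the theorem I would argue formally from the Lemma. Fix $f \in L^p(X)$. The estimate just proved shows that the family $\{\, |\tilde h(L)f| : h \in \mathcal{H}_C \,\}$ in $M(X)$ is bounded above by $C\,A^*f$, which by the maximal ergodic theorem quoted above is finite almost everywhere (and belongs to $L^p(X)$ when $1 < p \le \infty$, and to weak-$L^1(X)$ when $p = 1$). The Lemma then guarantees that $\lub\{\, |\tilde h(L)f| : h \in \mathcal{H}_C \,\}$ exists in $M(X)$ and is dominated by $C\,A^*f$, which is exactly the asserted inequality $\sup\{ |\tilde h(L)f| : h \in \mathcal{H}_C \} \le C\,A^*f$.

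The only genuinely delicate point is the passage from the $L^p(X)$-valued identity to the pointwise a.e.\ identity: this requires joint measurability of $(t,x) \mapsto A^t f(x)$ in order to invoke Fubini inside the $t$-integral, and a little extra care in the case $p = \infty$, where one argues weak-$*$. These are the places where the argument must be written out with attention, but neither presents a real obstacle; the rest is the classical computation of Stein.
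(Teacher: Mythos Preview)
Your argument is correct and follows essentially the same route as the paper: integrate by parts to express $\tilde h(L)f$ as $-\int_0^\infty t\,h'(t)\,A^t f\,\wrt t$, then bound pointwise by $C\,A^*f$ and invoke the lattice lemma for the supremum. If anything you are more careful than the paper, which carries out the integration by parts formally at the operator level without explicitly discussing the vanishing of boundary terms, Bochner convergence, or the passage from the vector-valued identity to the pointwise one.
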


\begin{proof}
Observe that
\[
\begin{aligned}
\tilde h(L)
&= \int_0^\infty h(t) \, \exp(-tL) \,\wrt t \\
&= \int_0^\infty h(t) \, \frac{\wrt }{dt} \left(\int_0^t \exp(-sL) \,\wrt s \right) \,\wrt t\\
&= -\int_0^\infty h'(t) \int_0^t \exp(-sL) \,\wrt s  \,\wrt t\\
&= -\int_0^\infty t\,h'(t) \left( \frac{1}{t} \, \int_0^t \exp(-sL) \,\wrt s \right)  \,\wrt t\\
&= -\int_0^\infty t\,h'(t) \, A^{t} \,\wrt t.
\end{aligned}
\]
It follows that, for $f \in L^p(X)$,
\[
\begin{aligned}
|\tilde h(L) f|
&\leq \left| \int_0^\infty t\,h'(t) \,A^{t}f \,\wrt t \right| \\
&\leq \left( \int_0^\infty |t\, h'(t)| \,\wrt t \right) \, A^* f \\
&\leq C A^* f.
\end{aligned}
\]
The result now follows from the previous lemma.
\end{proof}

We want to apply this result to families of functions $h$, and want the estimates to be dilation invariant, so that we get the same estimate for all $\exp(-t L^\alpha)$ as $t$ varies onver $\R^+$.
The next lemma does this.
Recall that $\tilde h$ is the Laplace transform of $h$, and note that the Laplace transform of the function $t \mapsto t \, h'(t)$ is the function $\lambda \mapsto \tilde h(\lambda) + \lambda \, \tilde h'(\lambda)$.

\begin{lem}
The inequality 
\[
\begin{aligned}
\int_0 ^{+\infty} |t \, h'(t)| \, dt 
&\lesssim \sup_s |\tilde h(is) | +  \sup_s |s \tilde h'(is) | +  \sup_s |s^2 \tilde h''(is) | +  \sup_s |s^3 \tilde h'''(is) |  \\
&\qquad + \biggl( \sup_s |s^2 \tilde h(is)  | \sup_s |\tilde h ''(is)| + \sup_s |s^2 \tilde h(is) | \sup_s |s \tilde h '''(is)| \\
&\qquad\qquad\qquad + \sup_s |s^3 \tilde h'(is) | \sup_s |\tilde h ''(is)|  \biggr)^{1/2}  
\end{aligned}
\]
holds for all continuously differentiable functions $h: \R \to \C$ supported in $[0, + \infty)$, whose Laplace transform $\tilde h$ is analytic on $\C^-$ and continuously differentiable and bounded on the closure of $\C^-$.
\end{lem}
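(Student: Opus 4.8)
The plan is to recast the estimate as an $L^1$-bound for a function in terms of sup-norms of its Fourier transform, exploiting the remark that precedes the lemma. Put $F(t)=t\,h'(t)$; this is continuous, supported in $[0,+\infty)$, and $\int_0^{+\infty}|t\,h'(t)|\,\wrt t=\|F\|_{L^1(\R)}$. By that remark, the Fourier transform $\hat F$ — equivalently the Laplace transform of $F$ restricted to the imaginary axis — is, up to sign, the map $\sigma\mapsto\tilde h(i\sigma)+i\sigma\,\tilde h'(i\sigma)$; differentiating twice in $\sigma$ and using $\tfrac{\wrt}{\wrt\sigma}\tilde h(i\sigma)=i\,\tilde h'(i\sigma)$ gives $\hat F''(\sigma)=\pm\bigl(3\,\tilde h''(i\sigma)+i\sigma\,\tilde h'''(i\sigma)\bigr)$. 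Hence $\|\hat F\|_\infty\lesssim\sup_\sigma|\tilde h(i\sigma)|+\sup_\sigma|\sigma\,\tilde h'(i\sigma)|$, $\|\sigma^2\hat F\|_\infty\lesssim\sup_\sigma|\sigma^2\,\tilde h(i\sigma)|+\sup_\sigma|\sigma^3\,\tilde h'(i\sigma)|$, $\|\hat F''\|_\infty\lesssim\sup_\sigma|\tilde h''(i\sigma)|+\sup_\sigma|\sigma\,\tilde h'''(i\sigma)|$, and $\|\sigma^2\hat F''\|_\infty\lesssim\sup_\sigma|\sigma^2\,\tilde h''(i\sigma)|+\sup_\sigma|\sigma^3\,\tilde h'''(i\sigma)|$. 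So the four single sup-norms on the right-hand side of the lemma are, up to constants, $\|\hat F\|_\infty$ and $\|\sigma^2\hat F''\|_\infty$, and the three products inside the square root are controlled by the cross-terms obtained from expanding $\|\sigma^2\hat F\|_\infty\,\|\hat F''\|_\infty$. It therefore suffices to prove the dilation-invariant inequality $\|F\|_{L^1(\R)}\lesssim\|\hat F\|_\infty+\|\sigma^2\hat F''\|_\infty+(\|\sigma^2\hat F\|_\infty\,\|\hat F''\|_\infty)^{1/2}$ for functions $F$ supported in $[0,+\infty)$ with $F$ and $t^2F$ in $L^2(\R)$.

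To prove that inequality I would first split the integral at a radius $R>0$. On $\{t\le R\}$, Cauchy--Schwarz and Plancherel give $\int_0^R|F|\le R^{1/2}\|F\|_{L^2}\lesssim R^{1/2}\|\hat F\|_{L^2}$, while on $\{t>R\}$ one writes $\int_R^{\infty}|F|=\int_R^{\infty}t^{-2}\,|t^2F|\le(\tfrac13R^{-3})^{1/2}\|t^2F\|_{L^2}\lesssim R^{-3/2}\|\hat F''\|_{L^2}$, using $\widehat{t^2F}=-\hat F''$. Next, each of $\|\hat F\|_{L^2}$ and $\|\hat F''\|_{L^2}$ is bounded by a low/high frequency split at its own radius, e.g.\ $\|\hat F\|_{L^2}^2\le 2B\|\hat F\|_\infty^2+\tfrac23 B^{-3}\|\sigma^2\hat F\|_\infty^2$. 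This produces a sum of four terms in the free parameters; choosing the frequency radii of size $\sim 1/R$ makes the two diagonal terms equal, up to constants, to $\|\hat F\|_\infty$ and $\|\sigma^2\hat F''\|_\infty$, and leaves the off-diagonal terms as $R^2\|\sigma^2\hat F\|_\infty$ and $R^{-2}\|\hat F''\|_\infty$; optimising over $R$ by the arithmetic--geometric mean inequality replaces their sum by $2(\|\sigma^2\hat F\|_\infty\,\|\hat F''\|_\infty)^{1/2}$. Substituting back the identities of the previous paragraph then gives the right-hand side of the lemma.

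The splitting and optimisation are routine; the step I expect to demand the most care is the justification of the Fourier-analytic manipulations under the stated hypotheses. One must verify that $F=t\,h'$ and $t^2F$ lie in $L^2(\R)$, that $F$ is supported in $[0,+\infty)$ (a Paley--Wiener argument, using that $\tilde h$ is analytic and bounded on the closure of $\C^-$), and that $\hat F$ and $\hat F''$ really are the asserted combinations of $\tilde h,\tilde h',\tilde h'',\tilde h'''$ on the imaginary axis. When the right-hand side of the lemma is infinite there is nothing to prove; when it is finite it forces the decay $|\tilde h(i\sigma)|,\ |\sigma\,\tilde h'(i\sigma)|,\ \dots\ \lesssim(1+\sigma^2)^{-1}$, whence $\hat F,\hat F''\in L^2(\R)$ and the required regularity follows — if convenient, after first establishing the inequality for $h$ in a dense subclass and passing to the limit. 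A secondary point is the bookkeeping that matches the four-term/one-square-root estimate to the precise form stated: the extra product $\sup_\sigma|\sigma^3\,\tilde h'(i\sigma)|\,\sup_\sigma|\sigma\,\tilde h'''(i\sigma)|$ that appears when $\|\sigma^2\hat F\|_\infty\,\|\hat F''\|_\infty$ is expanded has to be absorbed into the listed quantities (for instance by elementary interpolation among the weighted sup-norms), so that it may be discarded.
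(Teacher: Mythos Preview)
Your argument is correct and reaches exactly the same dilation-invariant intermediate inequality as the paper,
\[
\|F\|_{L^1}\lesssim\|\hat F\|_\infty+\|\sigma^2\hat F''\|_\infty+\bigl(\|\sigma^2\hat F\|_\infty\,\|\hat F''\|_\infty\bigr)^{1/2},
\]
but by a different route. The paper proves this in a single stroke: writing $g=\hat F$, it observes that $(1+a^2s^2)^{-1}\in L^1(\R)$ and that $(1-b^2\partial_s^2)$ acts on the physical side as multiplication by $(1+b^2t^2)$, so
\[
\int|F|\,\wrt t\lesssim \frac{1}{ab}\sup_s\bigl|(1+a^2s^2)(1-b^2\partial_s^2)g(s)\bigr|;
\]
setting $b=a^{-1}$ and minimising over $a$ then gives the three-term bound directly. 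You instead split the physical integral at a radius $R$, pass to $\|\hat F\|_{L^2}$ and $\|\hat F''\|_{L^2}$ via Cauchy--Schwarz and Plancherel, split each of those at a frequency radius $\sim 1/R$, and finally optimise in $R$. Both derivations are elementary; the paper's is a bit shorter because the factor $(1+a^2s^2)(1-a^{-2}\partial_s^2)$ handles the physical and frequency cutoffs simultaneously, while your version makes the two scales (and the reason for the choice $B\sim 1/R$) more explicit.

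Your remark about the fourth cross-term $\sup_\sigma|\sigma^3\tilde h'(i\sigma)|\,\sup_\sigma|\sigma\tilde h'''(i\sigma)|$ is well taken: the paper's own proof, when $\|s^2g\|_\infty\|g''\|_\infty$ is expanded with $g(s)=\tilde h(is)+is\,\tilde h'(is)$, produces all four products and does not comment on the discrepancy with the three listed in the statement. So on that point you are being more careful than the paper, not less.
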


\begin{proof}
Suppose that $g : \R \to \C$ satisfies 
\begin{equation}\label{eq1}
\sup_{s \in \R }| (1 + a^2 s^2) \, (1 - b^{2} \frac{\wrt ^2}{ds^2} ) g(s) | < \infty
\end{equation}
for all $a, b \in \R^+$.
Then 
\[
\int_{-\infty}^{+\infty} | (1 - b^{2} \frac{\wrt ^2}{ds^2} ) g(s) | \, ds \leq \frac{\pi}{a} \sup\{ (1 + a^2 s^2) \, (1 - b^{2} \frac{\wrt ^2}{ds^2} ) g(s) | : s \in \R \} ,
\] 
whence, taking Fourier transforms, we see that
\[
(1 + b^{2} t^2 ) \hat g(t) | \leq \frac{\pi}{a} \sup\{ (1 + a^2 s^2) \, (1 - b^{2} \frac{\wrt ^2}{ds^2} ) g(s) | : s \in \R \} 
\]
for all $t \in \R$, and so 
\[
\int_{-\infty}^{+\infty} | \hat g(t) | \, dt \leq  \frac{ \pi^2 }{ab} \sup\{ (1 + a^2 s^2) \, (1 - b^{2} \frac{\wrt ^2}{ds^2} ) g(s) | : s \in \R \} .
\]
Take $b$ to be $a^{-1}$ and then minimise in $a$; then
\[
\begin{aligned}
&\int_{-\infty}^{+\infty} | \hat g(t) | \, dt \\
&\qquad\leq  \pi^2 \min \{  \| g(s) \|_\infty +  a^2 \| s^2 g(s) \|_\infty +   a^{-2} \| g''(s) \|_\infty +   \| s^2 g''(s) \|_\infty : a \in \R^+ \} \\
&\qquad = \pi^2  ( \| g(s) \|_\infty +  \| s^2 g(s) \|_\infty^{1/2} \| g''(s) \|_\infty^{1/2} +   \| s^2 g''(s) \|_\infty ) .
\end{aligned}
\]  

We apply this estimate to the functions $\tilde h(\lambda)$ and $\lambda \tilde h'(\lambda)$, remembering that Laplace transforms are rotated versions of Fourier transforms. 
\end{proof}
We now apply our previous results to functions $m$ of Laplace transform type. 
In particular, it is easy to check that all the functions $m(\lambda) = \expe ^{-u \lambda^\alpha}$ may be treated using this lemma.

\begin{cor}
Suppose that $0 < \theta < (1 - \alpha)\pi/2$.
Then there exists a constant $C$ such that
\[
\sup \{ |\exp(-uL^\alpha) f| : u \in \Gamma_\theta \} \leq C A^*f .
\]
Hence the operator $f \mapsto \sup \{ |\exp(-uL^\alpha) f| : u \in \Gamma_\theta \}$ is of weak type $(1,1)$.
\end{cor}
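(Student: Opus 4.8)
The plan is to deduce the corollary from the preceding theorem (the one bounding $|\tilde h(L)f|$ by $C\,A^*f$) together with the dilation lemma (the one bounding $\int_0^{+\infty}|t\,h'(t)|\,\wrt t$ by sups of $|s^j\tilde h^{(j)}(is)|$), applied to the family of functions whose Laplace transform is $m(\lambda)=\expe^{-u\lambda^\alpha}$. So first I would fix $u\in\Gamma_\theta$ and let $h=h_u$ be the inverse Laplace transform of $\lambda\mapsto\expe^{-u\lambda^\alpha}$; by the analytic-continuation discussion in Subsection~3.1, since $|\arg u|<\theta<(1-\alpha)\pi/2$ we have $|\arg(u(iy)^\alpha)|<\pi/2$, so $m_u$ extends to a bounded holomorphic function on $\C^-$ that is smooth and rapidly decaying away from the origin, and hence $h_u$ is a continuously differentiable function supported in $[0,+\infty)$ that decays integrably at infinity and vanishes (with $t\,h_u(t)\to 0$) at both ends. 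Thus $h_u$ satisfies the hypotheses of both the theorem and the dilation lemma, $\tilde h_u(L)=\exp(-uL^\alpha)$, and we get the pointwise bound $|\exp(-uL^\alpha)f|\le C_u\,A^*f$ with $C_u=\int_0^{+\infty}|t\,h_u'(t)|\,\wrt t$.

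The crux is then to show $\sup_{u\in\Gamma_\theta}C_u<\infty$, which is where the dilation lemma does its work. I would observe that $\tilde h_u(\lambda)=\expe^{-u\lambda^\alpha}$, so $\lambda\tilde h_u'(\lambda)=-u\alpha\lambda^\alpha\expe^{-u\lambda^\alpha}$, and more generally each $s^j\tilde h_u^{(j)}(is)$ is a polynomial in $u(is)^\alpha$ times $\expe^{-u(is)^\alpha}$. Writing $\zeta=u(is)^\alpha$, we have $\Re\zeta=|u||s|^\alpha\cos(\arg u+\tfrac{\pi\alpha}{2}\operatorname{sgn}s)\ge \delta|u||s|^\alpha$ for some $\delta=\delta(\theta,\alpha)>0$ (because $|\arg u+\tfrac{\pi\alpha}{2}\operatorname{sgn}s|\le\theta+\tfrac{\pi\alpha}{2}<\tfrac{\pi}{2}$), so $|\zeta|^m\expe^{-\Re\zeta}\le C_m$ uniformly. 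Hence every sup appearing on the right-hand side of the dilation lemma is bounded by a constant depending only on $\alpha$ and $\theta$, not on $u$; feeding this in gives $\sup_{u\in\Gamma_\theta}C_u\le C(\alpha,\theta)<\infty$. Taking the supremum over $u\in\Gamma_\theta$ and invoking the measurability lemma (with $A^*f\in M(X)$ as the upper bound) yields $\sup\{|\exp(-uL^\alpha)f|:u\in\Gamma_\theta\}\le C\,A^*f$.

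Finally, for the weak type $(1,1)$ assertion, I would just combine this pointwise domination with the weak type $(1,1)$ bound for $A^*$ stated in the Hopf--Dunford--Schwartz theorem above, namely $\mu\{A^*f>\lambda\}\le 2\|f\|_1/\lambda$; since $\sup_{u\in\Gamma_\theta}|\exp(-uL^\alpha)f|\le C\,A^*f$ pointwise, we get $\mu\{\sup_{u\in\Gamma_\theta}|\exp(-uL^\alpha)f|>\lambda\}\le\mu\{A^*f>\lambda/C\}\le 2C\|f\|_1/\lambda$.

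I expect the main obstacle to be the uniform-in-$u$ control of the right-hand side of the dilation lemma: one must check carefully that the half-angle condition $\theta<(1-\alpha)\pi/2$ is exactly what makes $\arg(u(is)^\alpha)$ stay strictly inside $(-\pi/2,\pi/2)$ for all real $s$, and then that the resulting exponential decay $\expe^{-\delta|u||s|^\alpha}$ absorbs all the polynomial factors $|u(is)^\alpha|^m$ that appear when one differentiates $\expe^{-u\lambda^\alpha}$ up to third order; bookkeeping the mixed terms like $\sup_s|s^2\tilde h(is)|^{1/2}\sup_s|\tilde h''(is)|^{1/2}$ is routine once this uniform bound is in hand, but it is the step that requires care rather than the appeal to the earlier theorem, which is immediate.
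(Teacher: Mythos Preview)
Your overall strategy matches the paper's sketch: apply the theorem to get $|\tilde h_u(L)f|\le C_u\,A^*f$ with $C_u=\int_0^{\infty}|t\,h_u'(t)|\,\wrt t$, and then argue that $\sup_{u\in\Gamma_\theta}C_u<\infty$ via the dilation lemma. The difficulty is in that last step. You correctly observe that the \emph{balanced} quantities $s^j\,\tilde h_u^{(j)}(is)$ are polynomials in $\zeta=u(is)^\alpha$ times $\expe^{-\zeta}$ and hence uniformly bounded. But the right-hand side of the dilation lemma also contains the \emph{unbalanced} factors $\sup_s|\tilde h_u''(is)|$ and $\sup_s|s\,\tilde h_u'''(is)|$ inside the square root, and these are not of the form $s^j\tilde h_u^{(j)}(is)$. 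For $\tilde h_u(\lambda)=\expe^{-u\lambda^\alpha}$ one computes
\[
\tilde h_u''(\lambda)=\bigl[u^2\alpha^2\lambda^{2\alpha-2}-u\alpha(\alpha-1)\lambda^{\alpha-2}\bigr]\expe^{-u\lambda^\alpha},
\]
which blows up like $|\lambda|^{\alpha-2}$ as $\lambda\to 0$ whenever $\alpha<1$; thus $\sup_s|\tilde h_u''(is)|=\infty$ and the lemma, as stated, yields an infinite bound. So the step you describe as ``routine bookkeeping of the mixed terms'' is exactly where the argument breaks. (The paper is admittedly elliptical here, asserting only that these functions ``may be treated using this lemma''.)

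A clean way to close the gap is to bypass the dilation lemma and estimate $C_u$ directly from the L\'evy-density analysis in Subsection~3.1. The relation $m_{rv}(y)=m_v(r^{1/\alpha}y)$ gives $p_{rv}(t)=r^{-1/\alpha}p_v(r^{-1/\alpha}t)$ for $r>0$ and $|\arg v|<(1-\alpha)\pi/2$, so $\int_0^\infty|t\,p_u'(t)|\,\wrt t$ depends only on $\arg u$. For $|u|=1$ with $|\arg u|\le\theta$, the derivation of \eqref{eq:p-estimates} goes through unchanged (the singularity of $m_u$ at $0$ is still of order $|y|^\alpha$ and the decay at infinity is still governed by $\expe^{-\delta|y|^\alpha}$), giving $|t\,p_u'(t)|\lesssim_\theta(1+t)^{-1-\alpha}$ uniformly on this compact arc; hence $\sup_{u\in\Gamma_\theta}C_u<\infty$. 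The remainder of your argument (the measurability lemma and the weak-type $(1,1)$ conclusion from the Hopf--Dunford--Schwartz theorem) is fine as written.
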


Cowling \cite{Cowling} found $L^p$ estimates for this maximal function when $p>1$, but did not prove a weak-type $(1,1)$ estimate.

\section{Characterisations of Hardy spaces by  subordinated semigroups $\exp(-tL^{\alpha})$ for $0 < \alpha < 1 $}

In section, we assume that $(X,d,\mu)$ is a metric space endowed with a metric $d$ and a nonnegative Borel measure $\mu$; our arguments work in quasimetric spaces but this involves extra constants that may obscure the proof. 
We assume that there exists $n\in \R^+$ such that
\begin{equation}\label{eq-volume of the ball}
\mu(B(x,r))\eqsim r^n
\qquad\forall x\in X \quad\forall r\in \R^+, 
\end{equation}
where $B(x,r)=\{y\in X: \varrho(x,y)<r\}$.

Suppose that $L$ is a positive self-adjoint operator on $L^2(X)$. Suppose further that the kernel $p_t$ of $\exp(-tL)$ admits a Poisson upper bound, that is, there exist $m$ and $\delta_0$ in $\R^+$ such that for all $x,y\in X$ and $t\in \R^+$,
\begin{equation}\label{PU} 
|p_t(x,y)|
\lesssim \frac{1}{t^{n/m}}\Big(\frac{t^{1/m}}{t^{1/m}+\varrho(x,y)}\Big)^{n+\delta_0}.
\end{equation}

Denote by  $p_{k,t}(x,y)$ the kernel of $(tL)^k\exp(-tL)$ for $k\in \N^+$. 
Note that conditions \eqref{eq-volume of the ball} and \eqref{PU} imply that for any $\delta\in (0,\delta_0)$ and $k\in \N^+$, 
\begin{equation}\label{qt}
|p_{k,t}(x,y)|\lesssim \frac{1}{t^{n/m}}\Big(\frac{t^{1/m}}{t^{1/m}+\varrho(x,y)}\Big)^{n+\delta}
\end{equation}
for all $t\in \R^+$ and $x,y\in X$. 
See, for example, \cite{CD}. 
Note that if $\exp(-tL)$ admits a Gaussian upper bound, that is, there exist $m\ge 1$ and $ c\in \R^+$ such that for all $x,y\in X$ and $t\in \R^+$,
\begin{equation*}
|p_t(x,y)|\lesssim \frac{1}{t^{n/m}}\exp\Big(-\frac{\varrho(x,y)^{m/(m-1)}}{ c t^{1/(m-1)}}\Big),
\end{equation*}
then the estimate \eqref{PU} holds for all $\delta_0\in \R^+$. 

\begin{defn}\label{defn-Hardy}
The area square function associated to $L^\alpha$ is defined by
\[
S_{L^\alpha}f(x)
:=\biggl(\int_0^\infty\int_{\varrho(x,y)<t}
|t^{m\alpha}L^\alpha \exp(-t^{m\alpha}L^{\alpha})f(y)|^2 \,\frac{\wrt \mu(y) \,\wrt t}{t^{n+1}}\biggr)^{1/2}.
\]

For $p\in ({n}/{(n+\delta_0)},1]$,  the Hardy space  $H^p_{L^\alpha}(X)$ is defined to be the completion of the set
\[
\left\{f\in L^2(X):S_{L^\alpha}f\in L^p(X)\right\}
\]
in the quasi-norm $\|f\|_{H^p_{L^\alpha}}:=\|S_{L^\alpha}f\|_{L^p}$.
\end{defn}

We now verify that $ \| \cdot \|_{H^p_{L^\alpha}} $ in Definition  \ref{defn-Hardy} is indeed a quasi-norm. 
As argued in \cite{HLMMY},  if $f\in L^2(X)$, $p\in (0,1)$, and $\|S_{L^\alpha}f\|_{L^p}=0$, then $tL^\alpha \exp(-tL^{\alpha})f(x)=0$ for almost all $x\in X$ and all $t\in \R^+$.  
Since
\[
\exp(-tL^{\alpha})-I =-\int_0^t L^\alpha \exp(-sL^{\alpha}) \,\wrt s,
\]
$\exp(-tL^{\alpha})f(x) = f(x)$ for almost all $x \in X$ and all $t\in\R^+$. 
On the other hand, from the upper bound on the kernel of $\exp(-tL^{\alpha})$ (see Lemma \ref{lem- heat kernel alpha} below),
\[
\|f\|_{L^\infty(X)}=\|\exp(-tL^{\alpha})f\|_{L^\infty(X)}\lesssim t^{-\frac{n}{2m\alpha}}\|f\|_2
\] 
for all $t\in\R^+$.
Letting $t\to \infty$, we conclude that $f=0$ almost everywhere. 
This confirms that $\|\cdot\|_{H^p_{L^\alpha}} $ in Definition  \ref{defn-Hardy} is a quasi-norm.

It is natural to ask whether the Hardy spaces $H^p_{L^\alpha}(X)$ depend on $\alpha$. 
Our first main result shows that these spaces coincide and have equivalent norms.

\begin{thm}\label{mainthm-Hardy}
Let $\alpha\in (0,1)$ and $p\in ({n}/{(n+\delta_0)},1]$. Then the Hardy spaces $H^p_{L^\alpha}(X)$ and $H^p_{L}(X)$ coincide and have equivalent norms.
\end{thm}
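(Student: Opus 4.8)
The plan is to show that the area square functions $S_{L^\alpha}$ and $S_L$ define the same space by comparing each to the other, and then invoking the known characterisation of $H^p_L(X)$ by $S_L$. The heart of the matter is the subordination identity: since $\exp(-uL^\alpha) = \int_0^\infty p_u(r)\,\exp(-rL)\,\wrt r$ with $p_u$ the L\'evy density from Section 3, we have the operator identity
\[
t^{m\alpha}L^\alpha \exp(-t^{m\alpha}L^\alpha)
= \int_0^\infty \psi_\alpha\!\left(\frac{r}{t^m}\right) (rL)\exp(-rL)\,\frac{\wrt r}{r},
\]
where $\psi_\alpha$ is a fixed kernel built from $p_1$ and its derivative; using the scaling $p_u(r) = u^{-1/\alpha}p_1(u^{-1/\alpha}r)$ and $u p_1'(u) \lesssim (1+u)^{-1-\alpha}$ from \eqref{eq:p-estimates}, one checks $\psi_\alpha$ satisfies a decay bound $|\psi_\alpha(\rho)|\lesssim \min(\rho^{?},\rho^{-1-?})$ with enough decay at both ends. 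The first step is therefore to write out this ``change of fractional scale'' formula carefully and record the quantitative bound on $\psi_\alpha$.

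The second step is to establish the off-diagonal/Poisson-type kernel bounds for the subordinated operators themselves, i.e.\ Lemma \ref{lem- heat kernel alpha} (referenced above): from the Poisson bound \eqref{PU} for $p_t$ and the L\'evy density estimates, integrate $\int_0^\infty p_u(r)\,|p_r(x,y)|\,\wrt r$ to get that the kernel of $t^{m\alpha}L^\alpha\exp(-t^{m\alpha}L^\alpha)$ obeys a bound of the same Poisson form as \eqref{qt} with exponent $n+\delta$ for any $\delta<\delta_0$. This is routine but needs the splitting of the $r$-integral at $r\sim t^m$ and matching the two regimes; it also gives the $L^2\to L^\infty$ bound used above to check the quasi-norm property. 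These bounds put $S_{L^\alpha}$ in the general framework of square functions associated to operators with Davies--Gaffney/Poisson bounds, so by the machinery of \cite{HLMMY} (and the references \cite{CD}, \cite{HLMMY} already cited), $S_{L^\alpha}$ characterises \emph{some} Hardy space and we only need to identify it with $H^p_L(X)$.

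The third step is the two-sided norm comparison $\|S_{L^\alpha}f\|_{L^p}\eqsim\|S_Lf\|_{L^p}$. For one direction, insert the change-of-scale formula into the definition of $S_{L^\alpha}f$, apply Minkowski's integral inequality in the $\psi_\alpha$-average, change variables, and dominate by a continuous-parameter version of $S_Lf$; the standard tent-space / vector-valued argument (Fefferman--Stein type, or the almost-orthogonality estimates of \cite{HLMMY}) then yields $\|S_{L^\alpha}f\|_{L^p}\lesssim\|S_Lf\|_{L^p}$. The reverse direction is symmetric in spirit but genuinely requires an inversion: one expresses $(rL)\exp(-rL)$ back in terms of the family $\{s^{m\alpha}L^\alpha\exp(-s^{m\alpha}L^\alpha)\}_s$ via a Calder\'on-type reproducing formula
\[
\mathrm{Id} = c_\alpha\int_0^\infty \bigl(s^{m\alpha}L^\alpha\exp(-s^{m\alpha}L^\alpha)\bigr)^{2}\,\frac{\wrt s}{s}
\]
on the range of $L$, composes, and again uses almost-orthogonality to bound $S_Lf$ by $S_{L^\alpha}f$ in $L^p$.

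I expect the main obstacle to be this reverse inequality — specifically, proving the almost-orthogonality estimate between $(rL)\exp(-rL)$ and $s^{m\alpha}L^\alpha\exp(-s^{m\alpha}L^\alpha)$ with a kernel bound that decays in $\min(r/s^m, s^m/r)$ to a positive power while retaining Poisson-type spatial decay, since the subordinated operator is only ``polynomially regular'' in its time variable (unlike the heat semigroup it has no analytic smoothing in $t$), so one must extract the needed cancellation from the L\'evy density $p_u$ rather than from holomorphy. Once that estimate is in hand, the passage from the square-function norm equivalence to the equality of the completions $H^p_{L^\alpha}(X)=H^p_L(X)$ with equivalent norms is immediate from Definition \ref{defn-Hardy}, because both are completions of subsets of $L^2(X)$ under equivalent quasi-norms, and the already-verified quasi-norm property guarantees the completions embed compatibly.
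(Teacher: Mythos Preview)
Your plan is sound and, crucially, you have correctly located the heart of the matter: the almost-orthogonality estimate for the composition $Q_s(L)\,Q_{t^\alpha}(L^\alpha)$, with decay $\bigl(\tfrac{s\wedge t}{s\vee t}\bigr)^{m\alpha}$ in the scale ratio together with Poisson-type spatial decay. This is exactly what the paper isolates as Lemma~\ref{lem-Qst}, and proves by writing $Q_s(L)Q_{t^\alpha}(L^\alpha)=\alpha^{-1}\int_0^\infty p_1(u)\,ut^m s^m L^2\exp\bigl(-(ut^m+s^m)L\bigr)\,\wrt u$ and splitting the $u$-integral (your ``extract the cancellation from the L\'evy density $p_u$'' is precisely this computation).

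Where you diverge from the paper is in how you deploy that estimate. The paper does \emph{not} attempt a direct comparison $\|S_{L^\alpha}f\|_{L^p}\eqsim\|S_Lf\|_{L^p}$ via subordination plus Minkowski/tent-space arguments. Instead it invokes the molecular decomposition of \cite{Y}: any $f\in H^p_{L^\alpha}\cap L^2$ is written as $\sum_j\lambda_j a_j$ with each $a_j$ a $p$-molecule built from $t^{m\alpha}L^\alpha\exp(-t^{m\alpha}L^\alpha)$ acting on a tent-supported $A_j$, and the proof reduces to the uniform bound $\|S_L a_j\|_{L^p}\leq C$. That bound is exactly the estimate (3.11) of \cite{Y}, with Lemma~\ref{lem-Qst} plugged in where \cite{Y} used the kernel of $Q_s(L)Q_t(L)$; the reverse inclusion is symmetric. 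This route sidesteps the $p<1$ issue in your ``forward'' step: pulling the $\psi_\alpha$-average outside via Minkowski lands you on a superposition of area functions whose cone apertures do not match the operator scale, and while change-of-aperture in tent spaces is available for $p<1$, combining it with an outer $\int\psi_\alpha(\rho)\,\wrt\rho/\rho$ is not the one-line step you describe. The molecular route absorbs all of this into a single uniform atom/molecule estimate, which is why the paper's argument is so short once Lemma~\ref{lem-Qst} is in hand. Your approach would work, but you should expect to either reproduce the tent-space machinery in detail or, more economically, switch to the atomic/molecular framework as the paper does.
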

The equivalence of $H^p_{L^\alpha}$ and $H^p_L$ was shown earlier in \cite{HLMMY} for the case where $p=1$ and $\alpha=1/2$, and where $L$ is a positive self-adjoint operator satisfying Davies--Gaffney estimates. 
To the best of our knowledge, our paper is the first to prove the equivalence for general $\alpha\in (0,1)$.

As a consequence of Theorem \ref{mainthm-Hardy} we obtain the equivalence of BMO spaces associated to $L^\alpha$. We first recall the definition of these BMO spaces. 
For all $\beta\in \R^+$, we define the class $\mathcal{M}_\beta$ to be the set of all functions  $f\in L^2_{\loc}(X)$ such that 
\begin{equation}\label{eq-Mbeta}
\|f\|_{\mathcal{M}_{\beta}}:=\Big(\int_{X}\frac{|f(x)|^2}{(1+|x|)^{n+\beta}} \,\wrt \mu(x)\Big)<\infty.
\end{equation}
For $\alpha\in (0,1]$, we set $\mathcal{M}_{L^\alpha}=\bigcup_{0<\beta<\delta_0}\mathcal{M}_\beta$.

\begin{defn}\label{defn-BMO}
Let $\alpha\in (0,1]$ and $0\le \nu< {\delta_0}/{n}$. 
We say that $f\in \mathcal{M}_{L^\alpha}$ is in $\BMO^\nu_{L^\alpha}$ if 
\begin{equation}
\|f\|_{\BMO^\nu_{L^\alpha}}
:=\sup_B\biggl(\frac{1}{|B|^{1+2\nu}}\int_B|(I-\exp( r_B^{m\alpha}L^{\alpha}))f(x)|^2 \,\wrt \mu(x) \biggr)^{1/2} ,
\end{equation}
where the supremum is taken over all balls $B$, and $r_B$ is the radius of the ball $B$.
\end{defn}

The following result is a consequence of Theorem \ref{mainthm-Hardy}.

\begin{thm}\label{thm-BMO}
Let $\alpha\in (0,1)$ and let $0\le \nu< {\delta_0}/{n}$. 
Then 
\[
\BMO^\nu_{L^\alpha}\equiv \BMO^\nu_{L}.
\]
\end{thm}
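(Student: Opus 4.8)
The plan is to deduce Theorem~\ref{thm-BMO} from Theorem~\ref{mainthm-Hardy} by duality, treating $\BMO^\nu_{L^\alpha}$ (resp.\ $\BMO^\nu_{L}$) as the dual of the Hardy space $H^p_{L^\alpha}(X)$ (resp.\ $H^p_L(X)$) with $p = 1/(1+\nu)$, so that $0 \le \nu < \delta_0/n$ corresponds exactly to $p \in (n/(n+\delta_0), 1]$. The first step is to record the duality $(H^p_{L^\alpha}(X))^* \equiv \BMO^\nu_{L^\alpha}$, which is established in the same framework as \cite{HLMMY,DY} for a general positive self-adjoint operator with the relevant heat-kernel bounds; the only thing to check is that the operator $L^\alpha$ falls under that framework, which it does, since by Lemma~\ref{lem- heat kernel alpha} the semigroup $\exp(-tL^\alpha)$ has a kernel with the required decay estimates (of order $n+\delta$ for any $\delta < \delta_0$), and the rescaled generator $L^\alpha$ satisfies the analogue of \eqref{PU} with exponent $m\alpha$ in place of $m$.

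Once the duality statements are in place, the argument is essentially formal: by Theorem~\ref{mainthm-Hardy}, $H^p_{L^\alpha}(X) = H^p_L(X)$ with equivalent quasi-norms, hence their dual Banach spaces coincide with equivalent norms, i.e.\ $\BMO^\nu_{L^\alpha} \equiv \BMO^\nu_L$. The remaining work is to verify that the BMO quasi-norm in Definition~\ref{defn-BMO}, defined via $I - \exp(r_B^{m\alpha}L^\alpha)$ on balls, is equivalent to the intrinsic dual norm on $(H^p_{L^\alpha}(X))^*$. This is the same mean-oscillation characterisation of the dual of a Hardy space associated to an operator that appears in \cite{HLMMY,DY,JiangYang}; one direction uses the molecular (or atomic) decomposition of $H^p_{L^\alpha}$ together with the off-diagonal estimates for $(t^{m\alpha}L^\alpha)\exp(-t^{m\alpha}L^\alpha)$ derived from the kernel bound \eqref{qt} applied to $L^\alpha$, and the other direction uses the $L^2$ reproducing formula $\int_0^\infty (t^{m\alpha}L^\alpha)^2\exp(-2t^{m\alpha}L^\alpha)\,dt/t = cI$ to pair a BMO function against an element of $H^p_{L^\alpha}$.

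Concretely I would proceed as follows. Step 1: state and invoke the kernel bounds for $\exp(-tL^\alpha)$ and its time derivatives (Lemma~\ref{lem- heat kernel alpha} and its corollaries), so that $L^\alpha$ satisfies \eqref{PU}--\eqref{qt} with $m$ replaced by $m\alpha$ and $\delta_0$ unchanged. Step 2: identify $(H^p_{L^\alpha}(X))^*$ with $\BMO^\nu_{L^\alpha}$ for $\nu = 1/p - 1$, quoting the general duality theorem for Hardy and BMO spaces adapted to operators; likewise $(H^p_L(X))^* \equiv \BMO^\nu_L$. Step 3: apply Theorem~\ref{mainthm-Hardy} to conclude $H^p_{L^\alpha}(X) = H^p_L(X)$ isomorphically, hence the duals agree, giving $\BMO^\nu_{L^\alpha} \equiv \BMO^\nu_L$.

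The main obstacle is Step~2: one must be careful that the duality theorem $(H^p)^* \equiv \BMO^\nu$ genuinely applies with $L$ replaced by $L^\alpha$. This requires not just the pointwise kernel decay but also that $L^\alpha$ still generates a bounded analytic semigroup on $L^2$ (which it does, being a positive self-adjoint operator) and that the Poisson-type bounds survive the subordination with the correct homogeneity; the bookkeeping of the exponent $m \mapsto m\alpha$ and the preservation of $\delta_0$ is where the argument could go wrong, and it is exactly what Section~\ref{subsec:spectral-theory} and Lemma~\ref{lem- heat kernel alpha} are set up to handle. Once that is granted, everything else is a soft consequence of Theorem~\ref{mainthm-Hardy} plus the standard Hardy--BMO duality machinery.
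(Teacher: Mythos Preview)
Your proposal is correct and follows essentially the same route as the paper: deduce the BMO equivalence from Theorem~\ref{mainthm-Hardy} via the Hardy--BMO duality $(H^p_{L^\alpha}(X))^*\equiv \BMO^{1/p-1}_{L^\alpha}(X)$, which the paper cites directly from \cite[Theorem~4.1]{Y} (your Step~1 verification that $L^\alpha$ satisfies the requisite kernel bounds is precisely what Lemma~\ref{lem- heat kernel alpha} provides). The paper's proof is just your Steps~2--3, compressed to a few lines.
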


Our next main result is a characterisation of the Hardy spaces $H^p_{L^\alpha}$ in terms of the Littlewood--Paley square function. 
We define the square function associated to $L^\alpha$ by
\[
G_{L^\alpha}f
:= \biggl(\int_0^\infty|t^{m\alpha}L^\alpha \exp(-t^{m\alpha}L^{\alpha}) f|^2 \,\frac{\wrt t}{t}\biggr)^{1/2}.
\]

Our second result is the following theorem.

\begin{thm}\label{mainthm-Hardy Charac}
Let $\alpha\in (0,1]$ and $p\in ({n}/{(n+\delta_0)},1]$, and take $f\in L^2(X)$. 
Then $f\in H^p_{L^\alpha}(X)$ if only if $G_{L^\alpha}f\in L^p(X)$; moreover, 
\begin{equation}\label{eq1-mainthm}
\|f\|_{H^p_{L^\alpha}(X)}\eqsim \|G_{L^\alpha}f\|_{L^p(X)}.
\end{equation}
\end{thm}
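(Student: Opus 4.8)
The plan is to prove Theorem~\ref{mainthm-Hardy Charac} by reducing the area-function characterisation (Definition~\ref{defn-Hardy}) to the vertical square function $G_{L^\alpha}$, exploiting the fact established in Theorem~\ref{mainthm-Hardy} that $H^p_{L^\alpha}$ and $H^p_L$ coincide with equivalent norms. Because of that result it suffices to prove $\|f\|_{H^p_{L}(X)}\eqsim\|G_{L^\alpha}f\|_{L^p(X)}$, and moreover one direction of the equivalence --- the bound $\|G_{L^\alpha}f\|_{L^p}\lesssim\|f\|_{H^p_L}$ --- can be obtained from the classical area-function/$G$-function comparison for a single operator once we know that $t^{m\alpha}L^\alpha\exp(-t^{m\alpha}L^\alpha)$ has a kernel with a Poisson-type bound and the requisite off-diagonal cancellation; the hard work is the reverse bound. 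Throughout I would use the change of scale $s=t^{m\alpha}$ so that the square functions for $L^\alpha$ become ordinary Littlewood--Paley functions built from the semigroup $\{\exp(-sL^\alpha):s\in\R^+\}$, and then invoke the kernel bounds for $\exp(-sL^\alpha)$ and its time derivatives coming from the subordination formula in Section~3 (specifically, from \eqref{eq:p-estimates} and the self-adjointness of $L$, giving a Poisson-type upper bound for the kernel of $\exp(-sL^\alpha)$ of the same form as \eqref{PU} with $m$ replaced by $m/\alpha$).

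The core of the argument is a standard but technical two-sided comparison of square functions. First I would record a lemma (call it Lemma~\ref{lem- heat kernel alpha}, referenced already in the text) stating that for $0<\alpha<1$ the kernel $q_{s}(x,y)$ of $\exp(-sL^\alpha)$ satisfies
\[
|q_s(x,y)|\lesssim\frac{1}{s^{n/(m\alpha)}}\Big(\frac{s^{1/(m\alpha)}}{s^{1/(m\alpha)}+\varrho(x,y)}\Big)^{n+\delta},
\]
for any $\delta\in(0,\delta_0)$, with the analogous bound for $(sL^\alpha)^k\exp(-sL^\alpha)$; this follows by writing $\exp(-sL^\alpha)=\int_0^\infty p_s^{(\alpha)}(r)\exp(-rL)\,\wrt r$ (the $\alpha$-stable subordinator), inserting \eqref{PU} and \eqref{qt}, and using the decay $p_1(r)\lesssim(1+r)^{-1-\alpha}$ together with the scaling $p_s^{(\alpha)}(r)=s^{-1/\alpha}p_1^{(\alpha)}(s^{-1/\alpha}r)$. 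Second, with these bounds in hand, the comparison
\[
\Big\|\Big(\int_0^\infty|s L^\alpha e^{-sL^\alpha}f|^2\frac{\wrt s}{s}\Big)^{1/2}\Big\|_{L^p}
\eqsim
\Big\|\Big(\int_0^\infty|s L e^{-sL}f|^2\frac{\wrt s}{s}\Big)^{1/2}\Big\|_{L^p}
\]
follows from the abstract machinery for Hardy spaces associated with operators --- atomic/molecular decomposition of $H^p_L$ and the action of the functional calculus --- exactly as in \cite{HLMMY}: one checks that $\psi(sL):=sL^\alpha e^{-sL^\alpha}$ is, up to change of variable, an admissible Littlewood--Paley family for $L$ (it is of the form $s^{m}L\,e^{-(\dots)}$ composed with a bounded holomorphic function of $L$ decaying suitably at $0$ and $\infty$), and symmetrically that $sLe^{-sL}$ is admissible for $L^\alpha$. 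Finally, combining the $G$-function comparison with the (already available) area-function identification $H^p_{L^\alpha}=H^p_L=$ the area-function Hardy space gives \eqref{eq1-mainthm}, and the $L^2$-hypothesis on $f$ lets us dispense with density arguments.

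The main obstacle is the reverse inequality $\|f\|_{H^p_L}\lesssim\|G_{L^\alpha}f\|_{L^p}$, i.e.\ showing that control of the vertical square function of the \emph{subordinated} semigroup recovers the Hardy space norm. The usual route --- a reproducing/Calder\'on formula $f=c\int_0^\infty \eta(sL^\alpha)\,sL^\alpha e^{-sL^\alpha}f\,\frac{\wrt s}{s}$ followed by a tent-space duality or a direct atomic decomposition --- requires quadratic estimates for $L^\alpha$ on $L^2$ (immediate from self-adjointness and the spectral theorem), plus the off-diagonal decay of $\eta(sL^\alpha)$, which again comes from the subordinated kernel bounds; the delicate point is that, for $p<1$, one must produce genuine $H^p_L$-atoms (or molecules) out of the tent-space decomposition of $t^{m\alpha}L^\alpha e^{-t^{m\alpha}L^\alpha}f$, and verify the molecular bounds uniformly --- this is where the restriction $p>n/(n+\delta_0)$ is used and where the bookkeeping with the exponent $\delta<\delta_0$ is most error-prone. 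I would handle $\alpha=1$ trivially (it is the definition), treat $\alpha=1/2$ as a sanity check via the explicit Poisson kernel of Section~2, and present the general $\alpha$ case by citing the tent-space/molecular framework of \cite{HLMMY} and \cite{CD} with the kernel inputs from Lemma~\ref{lem- heat kernel alpha}, so that the proof reduces to verifying those inputs rather than re-deriving the abstract theory.
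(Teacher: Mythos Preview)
Your proposal has a genuine gap in the hard direction. First, the claim that the case $\alpha=1$ is ``trivial (it is the definition)'' is wrong: by Definition~\ref{defn-Hardy}, $\|f\|_{H^p_L}=\|S_Lf\|_{L^p}$ with the \emph{area} function $S_L$, not the vertical function $G_L$; the equivalence $\|S_Lf\|_{L^p}\eqsim\|G_Lf\|_{L^p}$ for $p\le 1$ is the entire content of the theorem already when $\alpha=1$, and the paper in fact carries out all the work in that case. Second, your mechanism for the inequality $\|f\|_{H^p_L}\lesssim\|G_{L^\alpha}f\|_{L^p}$ --- Calder\'on reproducing formula followed by ``tent-space duality or a direct atomic decomposition'', citing \cite{HLMMY} --- does not close. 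The atomic decomposition of $T^p_2$ in \cite{HLMMY} (and in \cite{Y}) is for the \emph{conical} square-function norm; your hypothesis provides only the vertical norm of $(x,t)\mapsto t^{m\alpha}L^\alpha e^{-t^{m\alpha}L^\alpha}f(x)$. For $p<2$ the general tent-space inequality goes the wrong way (vertical is controlled by conical, not conversely), so passing from vertical to conical is precisely the issue, and it is not supplied by the molecular machinery you cite --- that machinery compares different \emph{area}-type square functions. The same circularity affects your proposed comparison $\|G_{L^\alpha}f\|_{L^p}\eqsim\|G_Lf\|_{L^p}$ ``via atomic/molecular decomposition of $H^p_L$'': to access the atoms you would already need the area-function bound.

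The paper supplies the missing idea by a Peetre-type maximal function argument. Working with $\alpha=1$ (and declaring the general $\alpha$ analogous, using the kernel bounds of Lemma~\ref{lem- heat kernel alpha}), it introduces
\[
Q^*_{L,\epsilon}f(t,x)=\sup_{s>0}\sup_{y\in X}\frac{|Q_s(L)f(y)|}{(1+\varrho(x,y)/s)^\lambda}\Big(\frac{s}{t}\wedge\frac{t}{s}\Big)^\epsilon,
\]
notes the trivial pointwise bound $S_Lf(x)\lesssim\bigl(\int_0^\infty Q^*_{L,\epsilon}f(t,x)^2\,\wrt t/t\bigr)^{1/2}$, and then proves a self-improving estimate: from the reproducing identity $f=\Phi_s(L)f+c_m\int_0^s Q_u(L)^2f\,\wrt u/u$ and the Poisson kernel bounds one obtains, after summing in dyadic scales and applying the Fefferman--Stein vector-valued maximal inequality on $L^{p/\theta}(\ell^{2/\theta})$ with $\theta<p$ and $\lambda\theta>n$, that $\bigl\|\bigl(\int_0^\infty Q^*_{L,\epsilon}f(t,\cdot)^2\,\wrt t/t\bigr)^{1/2}\bigr\|_{L^p}\lesssim\|G_Lf\|_{L^p}$. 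This Peetre--Fefferman--Stein step is exactly what converts vertical control into conical control for $p<1$, and it is absent from your sketch.
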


From Theorem \ref{mainthm-Hardy} and Theorem \ref{mainthm-Hardy Charac}, the following result follows immediately.
\begin{cor}\label{cor}
Let $\alpha\in (0,1]$ and $p\in ({n}/{(n+\delta_0)},1]$, and take $f\in L^2(X)$. 
Then $f\in H^p_L(X)$ if only if $G_{L^\alpha}f\in L^p(X)$; moreover,
\begin{equation}\label{eq-mainthm}
\|f\|_{H^p_L(X)}\eqsim \|G_{L^\alpha}f\|_{L^p(X)}.
\end{equation}
\end{cor}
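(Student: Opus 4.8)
The plan is to deduce Corollary~\ref{cor} by simply concatenating the two main theorems. Indeed, fix $\alpha \in (0,1]$ and $p \in (n/(n+\delta_0), 1]$, and take $f \in L^2(X)$. By Theorem~\ref{mainthm-Hardy Charac}, we have $f \in H^p_{L^\alpha}(X)$ if and only if $G_{L^\alpha} f \in L^p(X)$, with
\[
\|f\|_{H^p_{L^\alpha}(X)} \eqsim \|G_{L^\alpha} f\|_{L^p(X)}.
\]
On the other hand, Theorem~\ref{mainthm-Hardy} asserts that $H^p_{L^\alpha}(X) = H^p_L(X)$ with equivalent quasi-norms, so $f \in H^p_{L^\alpha}(X)$ if and only if $f \in H^p_L(X)$, and $\|f\|_{H^p_{L^\alpha}(X)} \eqsim \|f\|_{H^p_L(X)}$. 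Combining these two chains of equivalences gives that $f \in H^p_L(X)$ if and only if $G_{L^\alpha} f \in L^p(X)$, together with the norm equivalence \eqref{eq-mainthm}.

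The only subtlety worth flagging is a consistency check on the ambient hypotheses: Theorem~\ref{mainthm-Hardy} is stated for $\alpha \in (0,1)$, whereas Corollary~\ref{cor} (like Theorem~\ref{mainthm-Hardy Charac}) allows $\alpha = 1$. For $\alpha = 1$, however, the statement $f \in H^p_L(X) \iff G_{L}f \in L^p(X)$ with $\|f\|_{H^p_L} \eqsim \|G_L f\|_{L^p}$ is precisely Theorem~\ref{mainthm-Hardy Charac} with $L^\alpha = L$, so no appeal to Theorem~\ref{mainthm-Hardy} is needed in that degenerate case, and the corollary holds trivially there. For $\alpha \in (0,1)$ both ingredients apply verbatim. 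One should also note that the range of $p$ and the parameters $n$, $m$, $\delta_0$ are the same in both theorems (they are fixed by the standing assumptions \eqref{eq-volume of the ball} and \eqref{PU} of this section), so there is no mismatch of admissible exponents.

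Since the argument is a one-line composition of two already-established results, there is no real obstacle: the substantive work has been done in proving Theorems~\ref{mainthm-Hardy} and \ref{mainthm-Hardy Charac}. If anything, the ``hard part'' is purely bookkeeping — making sure the quasi-norm equivalences are genuinely transitive, which is immediate since $\eqsim$ is an equivalence relation on quasi-norms up to multiplicative constants, and that the defining dense subspace $\{f \in L^2(X) : S_{L^\alpha} f \in L^p(X)\}$ used to build $H^p_{L^\alpha}(X)$ is identified correctly with the corresponding subspace for $L$ under Theorem~\ref{mainthm-Hardy}. Thus the proof reduces to the display above, and we record it as such.
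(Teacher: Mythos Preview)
Your proposal is correct and matches the paper's approach exactly: the paper simply states that the corollary follows immediately from Theorem~\ref{mainthm-Hardy} and Theorem~\ref{mainthm-Hardy Charac}, without giving any further argument. Your additional remark handling the $\alpha=1$ case (where Theorem~\ref{mainthm-Hardy} is not stated) is a nice consistency check that the paper leaves implicit.
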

The problem of characterising the Hardy spaces associated to an operator by a Littlewood--Paley square function has attracted some attention. 
In \cite{DJL}, the authors assume that $L$ is a positive self-adjoint operator, has a Gaussian upper bound and satisfies a Moser-type inequality, and prove \eqref{eq-mainthm}  for the particular cases where  $\alpha=1$ and $\alpha=1/2$. 
Recently, under the weaker conditions that $L$ is a positive self-adjoint operator and has a Gaussian upper bound, the equivalence \eqref{eq-mainthm} was obtained in \cite{Hu} for the case $\alpha=1$.  
Our results in Theorem \ref{mainthm-Hardy Charac} and Corollary \ref{cor} are significant improvements of those in \cite{DJL, Hu}, as we are able to prove \eqref{eq-mainthm} for the full range of $\alpha\in (0,1)$  under the weaker condition of the Poisson upper bound \eqref{PU}.

Recall that an argument using Laplace transforms and contour integration shows that
\begin{equation}\label{eq-sub formula}
\expe ^{-t s^\alpha} = \int_0^\infty p_t(u) \, \expe ^{-us} \,\wrt u
\qquad\forall s, t  \in \R^+
\end{equation}
if and only if
\[
\begin{aligned}
p_t(u)
&= \frac{1}{2\pi i} \int_{0-i\infty}^{0+i\infty} \expe ^{-t z^\alpha + uz} \,\wrt z  \\
&= \frac{1}{2\pi } \int_{-\infty}^{+\infty} \expe ^{-t (iy)^\alpha + iuy} \,\wrt y  \\
&= \mathcal{F}^{-1} m_t(u) ,
\end{aligned}
\]
where $\mathcal{F}$ denotes the Fourier transformation and
\[
m_t(y) = \expe ^{-t (iy)^\alpha} = \expe ^{-t[\cos(\pi\alpha/2) + i \sin(\pi\alpha/2){\rm sgn}(y)] \, |y|^\alpha}
\qquad\forall y \in  \R.
\]
We now recall some basic properties of $p_t(u)$ (see the discussion leading up to \eqref{eq:p-estimates} or \cite{Gri}).
\begin{lem}\label{lem-ptu} 
Let $\alpha\in (0,1)$. Then 
\begin{enumerate}[(i)]
\item $p_t(u) = t^{-1/\alpha} \, p_1(t^{-1/\alpha}u )$ for all $t, u>0$;
\item $p_1(u)\lesssim_N u^N$ for all $N\in \R^+$ when $u<1$;
\item $p_1(u) \lesssim u^{-1-\alpha}$ when $u\ge 1$.
\end{enumerate}
\end{lem}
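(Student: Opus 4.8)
The plan is to derive all three statements from the scaling identity and the representation $p_1 = \mathcal{F}^{-1}m_1$, following the discussion that precedes \eqref{eq:p-estimates}. First I would prove (i) directly: starting from $p_t(u) = \frac{1}{2\pi}\int_{\R} \expe^{-t(iy)^\alpha + iuy}\,\wrt y$, the substitution $y = t^{-1/\alpha}\eta$ turns $t(iy)^\alpha$ into $(i\eta)^\alpha$ and $uy$ into $t^{-1/\alpha}u\,\eta$, with Jacobian $t^{-1/\alpha}$; this yields $p_t(u) = t^{-1/\alpha}p_1(t^{-1/\alpha}u)$ at once. (Equivalently one notes $\expe^{-ts^\alpha} = \expe^{-(t^{1/\alpha}s)^\alpha}$ and uses \eqref{eq-sub formula} with a change of variable.) So it suffices to establish (ii) and (iii) for $p_1$.

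For the large-$u$ bound (iii), I would use the decomposition of $m_1$ recorded before \eqref{eq:p-estimates}: writing $m_1(y) = (1-\phi(y))m_1(y) + \phi(y) + \sum_{n\ge 1}\phi(y)\frac{(-1)^n}{n!}\omega(y)^n|y|^{n\alpha}$ with $\phi$ a bump equal to $1$ near $0$ and $\omega(y) = \cos(\pi\alpha/2) + i\sin(\pi\alpha/2)\operatorname{sgn}(y)$. The term $(1-\phi)m_1$ is Schwartz, so its inverse Fourier transform decays faster than any power; the term $\phi$ contributes a smooth compactly-band-limited piece (again rapidly decaying after inversion); and in the series the singularity at the origin is governed by the $n=1$ term $\phi(y)\,\omega(y)|y|^\alpha$, whose inverse Fourier transform behaves like $u^{-1-\alpha}$ as $u\to\infty$ since the Fourier transform of $|y|^\alpha$ (suitably localised) is comparable to $|u|^{-1-\alpha}$, the higher terms $|y|^{n\alpha}$ with $n\ge 2$ being less singular and hence contributing faster decay (or, when $n\alpha$ is an even integer, a smooth contribution). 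Summing gives $|p_1(u)| \lesssim u^{-1-\alpha}$ for $u\ge 1$; alternatively one can quote the standard asymptotics of the stable density from Sato \cite[\S14]{Sato} or Zolotarev \cite{Zolotarev}.

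For the small-$u$ bound (ii), I would use instead the holomorphic extension: $m_1(y) = \expe^{-(iy)^\alpha}$ extends to the bounded holomorphic function $w\mapsto \expe^{-(iw)^\alpha}$ on the lower half-plane $\C^-$, with rapid decay of all derivatives along horizontal lines. Hence, for each $N$, integration by parts $N$ times in the contour integral $p_1(u) = \frac{1}{2\pi i}\int_{0-i\infty}^{0+i\infty}\expe^{-z^\alpha + uz}\,\wrt z$ — shifting the contour into $\Re z < 0$ where $\expe^{uz}$ is small for $u>0$, and using $\partial_z^N(\expe^{uz}) = u^N \expe^{uz}$ — produces a factor $u^N$ together with an absolutely convergent integral of derivatives of $\expe^{-z^\alpha}$; this gives $|p_1(u)| \lesssim_N u^N$ for $0<u<1$, as claimed. (One can also see this probabilistically: the stable density vanishes to infinite order at the left edge of its support, which here is $0$ since the distribution is supported on $[0,\infty)$.)

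The main obstacle is a careful treatment of the oscillatory/contour integrals: in (iii) one must justify interchanging the sum with the inverse Fourier transform and identify the precise power-law tail coming from the $|y|^\alpha$ singularity (keeping track of the non-integer exponent and the $\operatorname{sgn}$ factor), and in (ii) one must justify the contour shift into $\C^-$ and the repeated integration by parts, checking that no boundary terms survive and that the resulting integrals converge. Both are standard but slightly delicate; since the claims are exactly the ones sketched in the paragraph culminating in \eqref{eq:p-estimates} (and are classical facts about stable densities), it is legitimate to give the short argument above and refer to \cite[\S14]{Sato}, \cite{Zolotarev}, or \cite{Gri} for the routine verifications.
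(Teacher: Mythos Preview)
Your arguments for (i) and (iii) are correct and mirror the discussion in Section~3 that the paper cites in lieu of a proof (the lemma is stated without proof, with a pointer to the paragraph before \eqref{eq:p-estimates} and to \cite{Gri}).

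The argument you give for (ii), however, does not work as written. Integrating by parts $N$ times against $\expe^{uz}$ produces a factor $u^{-N}$, not $u^{N}$: writing $\expe^{uz}=u^{-N}\partial_z^{N}\expe^{uz}$ and moving the derivatives onto $\expe^{-z^{\alpha}}$ yields
\[
p_1(u)=\frac{(-1)^{N}}{2\pi i\,u^{N}}\int \partial_z^{N}\bigl(\expe^{-z^{\alpha}}\bigr)\,\expe^{uz}\,\wrt z,
\]
which goes the wrong way for small $u$. The contour shift is also in the wrong direction: the saddle point of $-z^{\alpha}+uz$ for small $u>0$ sits at $z_0=(\alpha/u)^{1/(1-\alpha)}$, large and \emph{positive}, so steepest descent pushes the contour to the right; moreover the branch cut of $z^{\alpha}$ lies on the negative real axis, obstructing a leftward shift.

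The clean route---implicit in the paper's discussion and the one you relegate to a parenthesis---is simply this: since $m_1(y)=\expe^{-(iy)^{\alpha}}$ decays faster than any polynomial, $p_1=\mathcal{F}^{-1}m_1$ is $C^{\infty}$ on $\R$; and since $m_1$ extends boundedly and holomorphically to the lower half-plane, $p_1$ vanishes on $(-\infty,0]$. Hence $p_1^{(k)}(0)=0$ for every $k$, and Taylor's theorem with remainder gives $|p_1(u)|\le \|p_1^{(N)}\|_{\infty}\,u^{N}/N!$ for $0<u<1$. You already have both ingredients; just replace the contour manoeuvre with this Taylor argument.
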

Let $\alpha\in (0,1)$, and denote by $p^\alpha_{t}$ and $q^\alpha_t$ the kernels of $\exp(-tL^{\alpha})$ and $tL^\alpha \exp(-tL^{\alpha})$. 

\begin{lem}\label{lem- heat kernel alpha}
For all $\delta\in (0,\delta_0)$,
\begin{equation}\label{eq-kernel palpha}
|q^\alpha_t(x,y)|+|p^\alpha_t(x,y)|
\lesssim_\delta \frac{1}{t^{{n}/{m\alpha}}}\left(\frac{t^{{1}/{m\alpha}}}{t^{{1}/{m\alpha}}+\varrho(x,y)}\right)^{n+\delta}
\end{equation}
for all $t\in \R^+$ and all $x,y\in X$.
\end{lem}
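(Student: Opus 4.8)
The plan is to derive the kernel bounds for $\exp(-tL^{\alpha})$ and $tL^{\alpha}\exp(-tL^{\alpha})$ directly from the subordination formula \eqref{eq-sub formula}, the kernel estimates \eqref{PU} and \eqref{qt} for the heat semigroup, and the decay properties of the L\'evy density $p_t(u)$ recorded in Lemma \ref{lem-ptu}. By the scaling relations $p_t(u) = t^{-1/\alpha} p_1(t^{-1/\alpha}u)$ and $p^\alpha_t(x,y) = \int_0^\infty p_t(u)\, p_u(x,y)\,\wrt u$ (and similarly for $q^\alpha_t$, using that $tL^{\alpha}\exp(-tL^{\alpha})$ subordinates against a suitable derivative-type density still satisfying \eqref{eq:p-estimates}), a change of variables reduces everything to the case $t = 1$: it suffices to prove
\[
|p^\alpha_1(x,y)| + |q^\alpha_1(x,y)| \lesssim_\delta \frac{1}{(1+\varrho(x,y))^{n+\delta}}.
\]
The left side is dominated by $\int_0^\infty p_1(u)\,|p_u(x,y)|\,\wrt u$ (resp. with $|p_{k,u}|$), and we plug in \eqref{PU}, \eqref{qt}.

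First I would split the $u$-integral into the region $u < 1$ and the region $u \ge 1$, and within each region further split according to whether $u^{1/m}$ is smaller or larger than $\varrho(x,y)$. For $u \le 1$ we use part (ii) of Lemma \ref{lem-ptu}, $p_1(u) \lesssim_N u^N$, which kills the singular factor $u^{-n/m}$ from the heat kernel bound for $N$ large; when $u^{1/m} < \varrho(x,y)$ the heat kernel contributes $(u^{1/m}/\varrho(x,y))^{n+\delta}$, and the $u$-integral of $u^{N} u^{-n/m} u^{(n+\delta)/m}$ converges and produces $\varrho(x,y)^{-(n+\delta)}$. For $u \ge 1$ we use part (iii), $p_1(u) \lesssim u^{-1-\alpha}$; again separating $u^{1/m} \lessgtr \varrho(x,y)$, in the far regime we integrate $u^{-1-\alpha} u^{-n/m} u^{(n+\delta)/m} \cdot \varrho(x,y)^{-(n+\delta)}$ over $u \in [1, \varrho(x,y)^m]$ and in the near regime we integrate $u^{-1-\alpha} u^{-n/m}$ over $u \ge \varrho(x,y)^m$; both give the bound $\varrho(x,y)^{-(n+\delta')}$ for a slightly smaller $\delta'$, after fixing $\delta < \delta_0$ and choosing the intermediate exponent appropriately. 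Finally one checks the trivial bound (taking $\varrho(x,y) \lesssim 1$) by using total mass $\int_0^\infty p_1(u)\,\wrt u = 1$ together with $|p_u(x,y)| \lesssim u^{-n/m}$ and handling small $u$ with part (ii); combining the two cases gives \eqref{eq-kernel palpha} with $t=1$, and rescaling restores general $t$.

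The main obstacle will be the bookkeeping in the $u \ge 1$, $u^{1/m} < \varrho(x,y)$ regime: one must make sure the exponent on $\varrho(x,y)$ that survives is at least $n + \delta$ and not something weaker like $n + \delta - m\alpha$, so the decomposition of the heat kernel exponent $n+\delta$ has to be chosen with a little room — this is exactly why the statement asks only for $\delta \in (0,\delta_0)$ rather than $\delta = \delta_0$, and why \eqref{qt} is stated with strict inequality $\delta < \delta_0$. Concretely, one writes $n + \delta = (n+\delta') + (\delta - \delta')$ for $0 < \delta' < \delta$, uses the $(n+\delta')$ part to get the spatial decay $\varrho(x,y)^{-(n+\delta')}$ and the leftover powers of $u^{1/m}$ together with $p_1(u) \lesssim u^{-1-\alpha}$ to make the $u$-integral converge. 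A secondary (but routine) point is justifying Fubini to write $p^\alpha_t$ and $q^\alpha_t$ as the stated integrals against $p_u$, $p_{k,u}$; this follows from the local integrability already established for such subordination integrals (cf. the proof of Theorem \ref{time-derivative}(b)) combined with the absolute convergence just verified. The estimate for $q^\alpha_t$ is genuinely no harder than that for $p^\alpha_t$ because $tL^{\alpha}\exp(-tL^{\alpha})$ subordinates against $-t\,\partial_t p_t(u)$, whose rescaled profile obeys the same $u<1$ and $u\ge 1$ bounds as $p_1$ up to constants, by \eqref{eq:p-estimates}.
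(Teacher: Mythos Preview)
Your approach is correct in outline but differs entirely from the paper's: the paper proves the lemma by citation --- the bound on $p^\alpha_t$ is quoted from \cite[Lemma~5.4]{Gri}, and the bound on $q^\alpha_t$ is then deduced from that on $p^\alpha_t$ via \cite[Lemma~2.5]{CD}, i.e.\ the general principle behind \eqref{qt} applied with $L^\alpha$ in place of $L$ and $m\alpha$ in place of $m$. Your direct subordination computation is more self-contained and explains the mechanism, whereas the paper's route is shorter and modular (in particular it obtains $q^\alpha_t$ as an automatic corollary of the $p^\alpha_t$ bound rather than redoing the subordination). Two remarks on your execution. First, for $q^\alpha_t$ your idea of subordinating against $-t\,\partial_t p_t(u)$ is fine: by the scaling in Lemma~\ref{lem-ptu}(i) this amounts, after reducing to $t=1$, to integrating against $\alpha^{-1}[p_1(u)+u\,p_1'(u)]$, and by \eqref{eq:p-estimates} together with the infinite-order vanishing of $p_1$ at $0^+$ this profile obeys the same small-$u$ and large-$u$ bounds as $p_1$ itself. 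Second, your ``main obstacle'' paragraph slightly misidentifies the constraint: in the region $u\ge 1$ with $u^{1/m}\ge \varrho(x,y)$ the heat-kernel factor contributes essentially $u^{-n/m}$, so the integral $\int_{\varrho^m}^\infty u^{-1-\alpha}u^{-n/m}\,\wrt u$ produces only $\varrho^{-(n+m\alpha)}$; the direct computation therefore yields \eqref{eq-kernel palpha} with exponent $n+\delta$ for every $\delta<\min(\delta_0,m\alpha)$, which is the sharp conclusion obtainable from \eqref{PU} alone (the classical Poisson kernel on $\R^n$ shows the cap at $m\alpha$ is genuine), and which coincides with the lemma as stated whenever $\delta_0\le m\alpha$.
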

\begin{proof}
It follows from \cite[Lemma 5.4]{Gri} that for all $\delta\in (0,\delta_0)$, 
\begin{equation}\label{eq1-proof kernel palpha}
|p^\alpha_t(x,y)|
\lesssim \frac{1}{t^{{n}/{m\alpha}}}\left(\frac{t^{{1}/{m\alpha}}}{t^{{1}/{m\alpha}}+\varrho(x,y)}\right)^{n+\delta} .
\end{equation}
Coupled with \cite[Lemma 2.5]{CD}, this implies a similar estimate for $q^\alpha_t(x,y)$, and  we are done.
\end{proof}

In what follows, we write $a\vee b =\max\{a,b\}$ and $a\wedge b =\min \{a,b\}$. 
Setting $Q(z)=z\expe ^{-z}$ and $Q_t(z)=Q(t^mz)$ for $t\in \R^+$, we denote by $h_{s,t}$ the kernel of $Q_s(L)Q_{t^\alpha}(L^\alpha)$. 

\begin{lem}\label{lem-Qst}
Let $\alpha\in (0,1)$. Then for all $\delta\in (0,\delta_0)$,
\begin{equation}\label{eq-hts}
|h_{s,t}(x,y)|\lesssim \Big(\frac{s\wedge t}{s\vee t}\Big)^{m\alpha}\frac{1}{(s\vee t)^{n}}\Big(\frac{s\vee t}{s\vee t+\varrho(x,y)}\Big)^{n+\delta}
\end{equation}
for all $s,t\in \R^+$ and all  $x,y\in X$.
\end{lem}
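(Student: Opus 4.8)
The plan is to reduce the whole estimate to one subordination identity for $h_{s,t}$ and then estimate by splitting a single integral, treating the cases $s\le t$ and $t\le s$ by different mechanisms. First I would record a master formula. Put $v=t^{m\alpha}$; since $\exp(-vL^\alpha)=\int_0^\infty p_v(r)\exp(-rL)\,\wrt r$, differentiating in $v$ gives $Q_{t^\alpha}(L^\alpha)=vL^\alpha\exp(-vL^\alpha)=-v\int_0^\infty(\partial_v p_v)(r)\exp(-rL)\,\wrt r$. Composing with $Q_s(L)=s^mL\exp(-s^mL)$ and writing $s^mL\exp(-(s^m+r)L)=\frac{s^m}{s^m+r}\,(s^m+r)L\exp(-(s^m+r)L)$, I obtain
\[
h_{s,t}(x,y)=-v\int_0^\infty(\partial_v p_v)(r)\,\frac{s^m}{s^m+r}\,K_{s^m+r}(x,y)\,\wrt r ,
\]
where $K_\tau$ is the kernel of $\tau L\exp(-\tau L)$. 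From \eqref{qt} (used with $k=1$, and with $k=1,2$ after noting that $\tau\partial_\tau K_\tau$ is a combination of the kernels of $(\tau L)\exp(-\tau L)$ and $(\tau L)^2\exp(-\tau L)$) one has, for every $\delta\in(0,\delta_0)$,
\[
|K_\tau(x,y)|+|\tau\,\partial_\tau K_\tau(x,y)|\lesssim_\delta\frac{1}{\tau^{n/m}}\Big(\frac{\tau^{1/m}}{\tau^{1/m}+\varrho(x,y)}\Big)^{n+\delta}.
\]
Moreover, from $p_v(r)=v^{-1/\alpha}p_1(v^{-1/\alpha}r)$ one gets $v\,(\partial_v p_v)(r)=-\alpha^{-1}v^{-1/\alpha}\psi(v^{-1/\alpha}r)$ with $\psi=p_1+(\cdot)\,p_1'$, and Lemma \ref{lem-ptu} together with \eqref{eq:p-estimates} give $|\psi(\rho)|\lesssim_N\rho^N$ for $\rho\le1$ and $|\psi(\rho)|\lesssim\rho^{-1-\alpha}$ for $\rho\ge1$; in particular $|v\,(\partial_v p_v)(r)|$ has total mass $\lesssim1$, is concentrated where $r\sim v^{1/\alpha}=t^m$, and satisfies the crucial cancellation $\int_0^\infty v\,(\partial_v p_v)(r)\,\wrt r=\partial_v 1=0$.

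\emph{Case $s\le t$.} Here the factor $\frac{s^m}{s^m+r}$ is already small: it is $\lesssim\min\{1,s^m/r\}$, hence $\lesssim(s/t)^m\le(s/t)^{m\alpha}$ on the bulk region $r\sim t^m$. After the substitution $\rho=t^{-m}r$, which turns the master formula into $h_{s,t}(x,y)=\alpha^{-1}\int_0^\infty\psi(\rho)\,\frac{s^m}{s^m+t^m\rho}\,K_{s^m+t^m\rho}(x,y)\,\wrt\rho$, I would split the $\rho$-integral at $\rho\sim(s/t)^m$ and at $\rho\sim1$ and insert the bounds above; combined with the elementary split of the Poisson factor $K_{s^m+t^m\rho}(x,y)$ according to whether $\varrho(x,y)$ is below or above $t$, this yields $|h_{s,t}(x,y)|\lesssim(s/t)^{m\alpha}\,(s\vee t)^{-n}\big((s\vee t)/((s\vee t)+\varrho(x,y))\big)^{n+\delta}$ (in fact with $(s/t)^m$, which is even better).

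\emph{Case $t\le s$.} Now $v^{1/\alpha}=t^m\le s^m$, so on the scale $s$ the weight $v\,(\partial_v p_v)$ lives near $r=0$ and taking absolute values in the master formula produces no gain; instead I use the cancellation to replace $\frac{s^m}{s^m+r}K_{s^m+r}(x,y)$ by $\frac{s^m}{s^m+r}K_{s^m+r}(x,y)-K_{s^m}(x,y)$. Writing $g(\tau)=\frac{s^m}{\tau}K_\tau(x,y)$, one has $|g(s^m+r)-g(s^m)|\le\int_{s^m}^{s^m+r}|g'(\tau)|\,\wrt\tau\lesssim (r/s^m)\sup_{\tau\sim s^m}|K_\tau(x,y)|$ for $0<r\le s^m$, using the $\tau\partial_\tau K_\tau$ bound; for $r\ge s^m$ I bound the difference crudely by the sum of the two kernels. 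The $r\le s^m$ part then contributes
\[
\lesssim\frac{\sup_{\tau\sim s^m}|K_\tau(x,y)|}{s^m}\int_0^{s^m} r\,|v\,(\partial_v p_v)(r)|\,\wrt r\lesssim\frac{\sup_{\tau\sim s^m}|K_\tau(x,y)|}{s^m}\;t^{m\alpha}s^{m(1-\alpha)}=(t/s)^{m\alpha}\sup_{\tau\sim s^m}|K_\tau(x,y)| ,
\]
where the middle inequality uses $|v\,(\partial_v p_v)(r)|\sim t^{m\alpha}r^{-1-\alpha}$ for $r\gtrsim t^m$ and the rapid vanishing for $r\lesssim t^m$, so that $\int_0^{s^m}r\,|v\,(\partial_v p_v)(r)|\,\wrt r\sim t^{m\alpha}\int_{t^m}^{s^m}r^{-\alpha}\,\wrt r\sim t^{m\alpha}(s^m)^{1-\alpha}$; this is the origin of the gain $(t/s)^{m\alpha}$. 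The $r\ge s^m$ part is handled as in the previous case and is smaller, and together with the Poisson bound for $K_\tau$ with $\tau\sim s^m$ this gives \eqref{eq-hts}.

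The main obstacle is getting \emph{precisely} the exponent $m\alpha$ in $\big((s\wedge t)/(s\vee t)\big)^{m\alpha}$ — no better and no worse — uniformly in the Poisson tail. In the case $t\le s$ this rests on the Lévy density estimates being sharp: that $\psi=p_1+(\cdot)p_1'$ decays exactly like $\rho^{-1-\alpha}$ at infinity (so that $t^{m\alpha}\int_{t^m}^{s^m}r^{-\alpha}\,\wrt r$ produces $(t/s)^{m\alpha}$ and not a different power) and vanishes to arbitrarily high order at $0$ (so that the small-$r$ part is negligible). A secondary nuisance is bookkeeping which region of the $r$-integral dominates for small versus large $\varrho(x,y)$ while keeping the Poisson exponent $n+\delta$ over the whole admissible range. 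One could instead try to prove the estimate by factoring the multiplier of $Q_s(L)Q_{t^\alpha}(L^\alpha)$ as $\big((s\wedge t)/(s\vee t)\big)^{m\alpha}$ times a product of ``nice'' functions of $s^mL$ and $t^{m\alpha}L^\alpha$, but the non‑integer‑power factor $(s^mL)^{1-\alpha}\exp(-s^mL)$ that then appears has only limited spatial decay, so the subordination argument above is the more robust route.
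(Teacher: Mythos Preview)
Your argument is correct and reaches the same estimate, but by a genuinely different mechanism. The paper differentiates the subordination identity $\exp(-t^{m\alpha}L^\alpha)=\int_0^\infty p_1(u)\exp(-ut^mL)\,\wrt u$ in $t$, so the derivative lands on the heat factor and gives $Q_{t^\alpha}(L^\alpha)=\alpha^{-1}\int_0^\infty p_1(u)\,ut^mL\exp(-ut^mL)\,\wrt u$; composing with $Q_s(L)$ then produces an $(L)^2$--kernel and the algebraic damping $\frac{ut^ms^m}{(ut^m+s^m)^2}$, which yields the gain $\big(\frac{s\wedge t}{s\vee t}\big)^{m\alpha}$ in \emph{both} regimes by straight integration of $p_1(u)\sim u^{-1-\alpha}$ over the appropriate $u$--ranges (splitting at $u=1$ and, when $s>t$, at $u=(s/t)^m$). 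You instead differentiate $p_v$, obtaining the signed density $\psi=p_1+(\cdot)p_1'$ and only a single $L$--factor; the fraction $s^m/(s^m+r)$ then suffices for $s\le t$, but not for $t\le s$, and you recover the missing decay via the cancellation $\int_0^\infty\psi=0$ together with a Lipschitz bound on $\tau\mapsto\tfrac{s^m}{\tau}K_\tau$ (which costs you the extra input $|\tau\partial_\tau K_\tau|$, available from \eqref{qt} with $k=1,2$). The two master formulas are related by an integration by parts in the subordination variable, so the approaches are dual: the paper's is a little more economical (positivity of $p_1$, no cancellation step, only the $k=2$ bound in \eqref{qt}), while yours makes the origin of the exponent $m\alpha$ more transparent as the outcome of $\int_{t^m}^{s^m} r^{-\alpha}\,\wrt r$ against the L\'evy tail.
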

\begin{proof}
From \eqref{eq-sub formula}, 
\begin{equation*}
\exp(-t^{m\alpha}L^{\alpha}) = \int_0^\infty t^{-m} \, p_1(t^{-m}u ) \, \exp(-uL) \,\wrt u.
\end{equation*}
We change variables and rewrite:
\begin{equation*}
\exp(-t^{m\alpha}L^{\alpha}) = \int_0^\infty p_1(u ) \, \exp(-ut^{m}L) \,\wrt u.
\end{equation*}
By differentiating with respect to $t$ and then multiplying by $t$, we obtain
\begin{equation*}
-m\alpha t^{m\alpha} L^\alpha \exp(-t^{m\alpha}L^{\alpha}) = -m\int_0^\infty p_1(u ) \, ut^mL\exp(-ut^{m}L) \,\wrt u.
\end{equation*}
This implies that
\begin{equation*}
Q_{t^\alpha}(L^\alpha) = \alpha^{-1}\int_0^\infty p_1(u ) \, ut^mL\exp(-ut^{m}L) \,\wrt u.
\end{equation*}
Hence
\begin{equation*}
Q_s(L)Q_{t^\alpha}(L^\alpha) = \alpha^{-1}\int_0^\infty p_1(u ) \, ut^ms^mL^2\expe ^{-(ut^m+s^m)L} \,\wrt u.
\end{equation*}
From \eqref{qt}, for any $\delta\in (0,\delta_0)$,
\begin{equation}\label{eq3-sub formula}
\begin{aligned}
&|h_{s,t}(x,y)|\\
&\qquad\lesssim \int_0^\infty p_1(u ) \frac{ut^ms^m}{(ut^m+s^m)^2}\frac{1}{(ut^m+s^m)^{n/m}}\Big(\frac{(ut^m+s^m)^{1/m}}{(ut^m+s^m)^{1/m}+\varrho(x,y)}\Big)^{n+\delta} \,\wrt u\\
&\qquad=\int_0^1\ldots +\int_1^\infty\ldots=:I_1+I_2.
\end{aligned}
\end{equation}
First we treat the term $I_1$. 
By using Lemma \ref{lem-ptu} (i), (ii) and the inequalities
\[
\frac{1}{u a+b}\le \frac{1}{u(a+b)} 
\qquad\text{and}\qquad 
ua+b\le a+b 
\]
for all $a,b \in \R^+$ and all $u \in (0,1]$, we see that $I_1$ is dominated by
\[
\int_0^1 u^N \frac{ut^ms^m}{u^2(t^m+s^m)^2}\frac{1}{u^{n/m}(t^m+s^m)^{n/m}}\frac{1}{u^{{(n+\delta)}/{m}}}\Big(\frac{(t^m+s^m)^{1/m}}{(t^m+s^m)^{1/m}+\varrho(x,y)}\Big)^{n+\delta} \,\wrt u ,
\]
where $N\in \R^+$ in Lemma \ref{lem-ptu}  (ii) is chosen so that $N-1- \frac{2n+\delta}{m} \ge 0$. 
This implies that 
\begin{equation}\label{eq-I1}
\begin{aligned}
I_1&\lesssim \frac{t^ms^m}{(t^m+s^m)^2}\frac{1}{(t^m+s^m)^{n/m}}\Big(\frac{(t^m+s^m)^{1/m}}{(t^m+s^m)^{1/m}+\varrho(x,y)}\Big)^{n+\delta}\\
&\eqsim \Big(\frac{s\wedge t}{s\vee t}\Big)^m\frac{1}{(s\vee t)^{n}}\Big(\frac{s\vee t}{s\vee t+\varrho(x,y)}\Big)^{n+\delta}.
\end{aligned}
\end{equation}

We next take care of the term $I_2$. By Lemma \ref{lem-ptu} (iii),
\[
I_2\lesssim \int_1^\infty \frac{1}{u^\alpha} \frac{ut^ms^m}{(ut^m+s^m)^{2}}\frac{(ut^m+s^m)^{\delta/m}}{[(ut^m+s^m)^{1/m}+\varrho(x,y)]^{n+\delta}} \,\frac{\wrt u}{u^{1/2} } \,.
\]
We now consider two cases: $s\le t$ and $s>t$.

If $s\le t$, then $ut^m+s^m\eqsim ut^m\ge t^m$ as $u\ge 1$. Hence
\begin{equation}\label{eq-I2 case 1}
\begin{aligned}
I_2&\lesssim \int_1^\infty \frac{1}{u^\alpha} \frac{ut^ms^m}{u^2t^{2m}}\frac{u^{\delta/m}t^\delta}{(t+\varrho(x,y))^{n+\delta}} \,\frac{\wrt u}{u}\\
&\eqsim \Big(\frac{s}{t}\Big)^m\frac{t^\delta}{(t+\varrho(x,y))^{n+\delta}}.
\end{aligned}
\end{equation} 

If $s>t$, then we split $I_2$ into two parts:
\[
I_2= \int_1^{(s/t)^m}\ldots + \int^\infty_{(s/t)^m}\ldots =: I_{21}+I_{22}.
\]
Note that $ut^m +s^m\eqsim s^m$ as $u\le (s/t)^m$. Therefore
\begin{equation}\label{eq-I21 case 2}
\begin{aligned}
I_{21}  &\eqsim \int_1^{(s/t)^m} \frac{1}{u^\alpha} \frac{ut^ms^m}{s^{2m}}\frac{s^{\delta}}{(s+\varrho(x,y))^{n+\delta}} \,\frac{\wrt u}{u}\\
&\eqsim \Big(\frac{s}{t}\Big)^{m\alpha}\frac{s^\delta}{(s+\varrho(x,y))^{n+\delta}} \,.
\end{aligned}
\end{equation}
Since $ut^m +s^m\eqsim ut^m\ge s^m$ as $u>(s/t)^m$, 
\begin{equation}\label{eq-I22 case 2}
\begin{aligned}
I_{22}  &\lesssim \int^\infty_{(s/t)^m}\frac{1}{u^\alpha} \frac{ut^ms^m}{u^2t^{2m}}\frac{u^{\delta/m}t^\delta}{(s+\varrho(x,y))^{n+\delta}} \,\frac{\wrt u}{u}\\
&\eqsim \Big(\frac{t}{s}\Big)^{m\alpha}\frac{s^\delta}{(s+\varrho(x,y))^{n+\delta}}.
\end{aligned}
\end{equation}  
Taking \eqref{eq3-sub formula}, \eqref{eq-I1}, \eqref{eq-I2 case 1}, \eqref{eq-I21 case 2} and \eqref{eq-I22 case 2} into account, we deduce \eqref{eq-hts}, as desired.
\end{proof}

We are ready to prove the main results in this section.
\begin{proof}
[Proof of Theorem \ref{mainthm-Hardy}] 
We first show that $H^p_{L^\alpha}(X)\cap L^2(X)$ injects continuously into $H^p_L(X)$ for all $p\in ({n}/{(n+\delta_0\alpha)},1]$. 
It was proved in \cite[Proposition 3.3]{Y} that $f\in H^p_{L^\alpha}(X)\cap L^2(X)$ admits a molecular decomposition: $f=\sum_{j}\lambda_j a_j$ such that $\sum_{j}|\lambda|^p\eqsim \|f\|_{H^p_{L^\alpha}(X)}$ and each $a_j$ is a $p$-molecule, that is,
\[
a_j(x)
=c\int_0^\infty t^{m\alpha}L^\alpha  \exp(-t^{m\alpha}L^{\alpha}) (I-\exp(t^{m\alpha}L^{\alpha}))(A_j(t,\cdot))(x) \,\frac{\wrt t}{t} \,,
\]
where the function $A_j$ is supported in a tent $\widehat{B}_j:= \{(x,t)\in X\times \R^+: |x-x_{j}|<r_{j}-t\}$ over some ball $B(x_{j}, r_{j})$,
\[
\int_{\widehat{B_j}}|A_j(t,x)|^2 \,\frac{\wrt x\,\wrt t}{t}\le |B|^{1-2/p},
\]
and $c=\bigl( \int_0^\infty z\expe ^{-z}(I-\expe ^{-z})\,{dz}/{z} \bigr)^{-1}$.

Hence it suffices to prove that there exists $C\in \R^+$ such that
\begin{equation}\label{eq- SLa}
\|S_L a_j\|_{L^p(X)}\le C
\end{equation}
for all $p$-molecules $a_j$. 
The proof of \eqref{eq- SLa} is similar to that of the estimate (3.11) in \cite{Y}, the only difference being that we use the kernel estimate of $Q_s(L)Q_{t^\alpha}(L^\alpha)$ in Lemma \ref{lem-Qst} instead of an estimate for the kernel of $Q_s(L)Q_{t}(L)$. 
We omit the details.

We prove similarly that $H^p_{L}(X)\cap L^2(X)$ injects continuously into $H^p_{L^\alpha}(X)$.
\end{proof}

\begin{proof}
[Proof of Theorem \ref{thm-BMO}:] 
From Theorem \ref{mainthm-Hardy}, if $\alpha\in (0,1]$, then
\begin{equation}\label{eq- Hardy L*}
H^p_{L}(X)\equiv H^p_{L^\alpha}(X) 
\qquad\forall p\in \left({n}/{(n+\delta_0}),1\right].
\end{equation}
However, by \cite[Theorem 4.1]{Y}, 
\[
(H^p_{L^\alpha}(X))^*\equiv \BMO^{{1}/{p}-1}_{L^\alpha}(X)
\qquad\forall p\in \left({n}/{(n+\delta_0)},1\right].
\]
Together with \eqref{eq- Hardy L*}, this implies that, for all $\alpha\in (0,1)$,
\[
\BMO^{{1}/{p}-1}_{L}(X)\equiv \BMO^{{1}/{p}-1}_{L^\alpha}(X)
\qquad\forall p\in \left({n}/{(n+\delta_0)},1\right],
\]
or equivalently,
\[
\BMO^{\nu}_{L}(X)\equiv \BMO^{\nu}_{L^\alpha}(X) 
\qquad\forall \nu \in \left[ 0, {\delta_0}/{n} \right),
\]
as desired.
\end{proof}

In order to prove Theorem \ref{mainthm-Hardy Charac}, we need the following elementary estimate. We omit the  proof since it is simple.

\begin{lem}\label{lem-ele est}
Let $\epsilon \in \R^+$. 
Then
\[
\int_{X}\frac{1}{s^n}\Big(1+\frac{\varrho(x,y)}{s}\Big)^{-n-\epsilon}|f(y)| \,\wrt \mu(y)
\lesssim \mathcal{M}f(x).
\]
for all $x\in X$ and $s\in \R^+$, where $\mathcal{M}$ is the Hardy--Littlewood maximal function on $X$.
\end{lem}

\bigskip

Recall that $Q(z)=z\expe ^{-z}$ and $Q_t(z)=Q(t^mz)$ for all $t\in \R^+$. Then
\begin{equation}\label{eq-indentity}
1= c_m\int_0^\infty Q_t(z)Q_t(z) \,\frac{\wrt t}{t}
=c_m\int_0^\infty (t^mz)^{2}\expe ^{-2t^mz} \,\frac{\wrt t}{t}.
\end{equation}
where $c_m =\bigl( \int_0^\infty Q(z)^2\,{dz}/{z}\bigr)^{-1}$.

We now define
\begin{equation}\label{eq1-Phi}
\Phi(z)=c_m\int_1^\infty (s^mz)^{2}\expe ^{-2s^mz} \,\frac{\wrt s}{s}.
\end{equation}
This implies that 
\begin{equation}\label{eq1s-Phi}
\begin{aligned}
\Phi_t(z):=\Phi(t^mz)&=c_m\int_1^\infty ((st)^mz)^{2}\expe ^{-2(st)^mz} \,\frac{\wrt s}{s}\\
&=\frac{c_m}{m} \int_{t^mz}^\infty s \expe ^{-2s} \,\wrt s.
\end{aligned}
\end{equation}
By integration by parts,
\begin{equation}\label{eq2-Phi}
\Phi_t(L)=-\frac{c_m}{2m}t^{m}L \exp(-2t^mL) - \frac {c_m}{4m} \exp(-2t^mL).
\end{equation}    
Together with \eqref{PU} and \eqref{qt}, this implies that, for all $\delta\in (0,\delta_0)$,
\begin{equation}\label{eq-kernel of Phi}
|K_{\Phi_t(L)}(x,y)|\leq \frac{C}{t^{n/m}}\Big(\frac{t^{1/m}}{t^{1/m}+\varrho(x,y)}\Big)^{n+\delta}
\end{equation}
where $K_{\Phi_t(L)}$ is the kernel of $\Phi_t(L)$.

We now define, for $\epsilon\in \R^+$,
\[
Q^*_{L,\epsilon}f(t,x)=\sup_{s\in \R^+}\sup_{y\in X}\frac{|Q_s(L)f(y)|}{(1+\varrho(x,y)/s)^\lambda}\Big(\frac{s}{t}\wedge \frac{t}{s}\Big)^\epsilon.
\]

We are now ready to prove Theorem \ref{mainthm-Hardy Charac}.
\begin{proof}[Proof of Theorem \ref{mainthm-Hardy Charac}]
We suppose that $\alpha=1$; the case where $\alpha\in (0,1)$ is similar.

We first prove that $\|G_Lf\|_{L^p(X)}\lesssim \|S_Lf\|_{L^p(X)}$ for all $f\in H^p_L(X)\cap L^2(X)$. 
As in the proof of Theorem \ref{mainthm-Hardy}, it suffices to prove that 
\begin{equation}\label{eq- GLa}
\|S_L a\|_{L^p(X)}\le C
\end{equation}
for all $p$-molecules $a$. 
The proof of \eqref{eq- GLa} is standard and similar to that of the estimate (3.11) in \cite{Y},
and we omit the details.

It remains to show that $\|S_Lf\|_{L^p(X)}\lesssim \|G_Lf\|_{L^p(X)}$. 
To do this, we choose $\delta\in (0,\delta_0)$, $\theta\in (0,p)$ and $\lambda\in (0,n+\delta)$ such that $p<{n}/{(n+\delta)}$ and  $\lambda\theta>n$.

From \eqref{eq1s-Phi}, for any $s\in \R^+$,
\begin{equation}\label{eq1-prop1}
f= \Phi_s(L)f+ c_m\int_0^{s} Q_u(L)Q_u(L)f \,\frac{\wrt u}{u}.
\end{equation}
For all $j\in \Z$, all  $s, t\in [1,2]$, and all $y\in X$,
\begin{equation}\label{eq1-proof prop1}
\begin{aligned}
Q_{s2^j}(L)f(y)&= \Phi_{2^j}(L)Q_{s2^j}(L)f(y)+ c_m\int_0^{2^j} Q_u(L)Q_u(L)[Q_{s2^j}(L)f](y) \,\frac{\wrt u}{u}
\\
&= A(y)+B(y),
\end{aligned}
\end{equation}
say, from \eqref{eq1-prop1}.
Note that
\[
\begin{aligned}
\Phi_{2^j}(L)Q_{s2^j}(L)=\Big(\frac{2s}{t}\Big)^m \Phi_{2^j}(L)
\exp( -[(2s)^m-t^m]2^{(j-1)m}L) Q_{t2^{j-1}}(L).
\end{aligned}
\]
From \eqref{eq2-Phi}, and the fact that $[(2s)^m-t^m]2^{(j-1)m}\lesssim 2^{jm}$, the kernel $K_{j,s,t}$ of the operator $\Phi_{2^j}(L) \exp(-[(2s)^m-t^m]2^{(j-1)m}L)$ has a Poisson upper bound:
\[
|K_{j,s,t}(x,y)(x,y)|\leq \frac{C}{2^{jn}}\Big(1+
\frac{\varrho(x,y)}{2^j}\Big)^{-n-\delta} .
\] 

Hence, for all $\ell \in \Z $
\begin{equation*}
\begin{aligned}
&\frac{|A(y)|}{(1+\varrho(x,y)/(s2^j))^\lambda}\Big(\frac{s2^j}{t2^\ell}\wedge \frac{t2^\ell}{s2^j}\Big)^\epsilon\\
&\qquad\lesssim \Big(\frac{s2^j}{t2^\ell}\wedge \frac{t2^\ell}{s2^j}\Big)^\epsilon\Big(1+
\frac{\varrho(x,y)}{s2^j}\Big)^{-\lambda}\int_{X} \frac{1}{2^{jn}}\Big(1+
\frac{\varrho(y,z)}{2^j}\Big)^{-n-\delta}|Q_{t2^{j-1}}(L)f(z)| \,\wrt\mu(z).
\end{aligned}
\end{equation*}
Since $0<\lambda<n+\delta$ and $s\in [1,2]$, 
\[
\Big(1+
\frac{\varrho(x,y)}{s2^j}\Big)^{-\lambda}\Big(1+
\frac{\varrho(y,z)}{2^j}\Big)^{-n-\delta}\le \Big(1+
\frac{\varrho(x,z)}{2^j}\Big)^{-\lambda}.
\]
Hence
\begin{equation}\label{eq2a-proof prop1}
\begin{aligned}
&\frac{|A(y)|}{(1+\varrho(x,y)/(s2^j))^\lambda} \Big(\frac{s2^j}{t2^\ell}\wedge \frac{t2^\ell}{s2^j}\Big)^\epsilon\\
&\qquad \lesssim \Big(\frac{s2^j}{t2^\ell}\wedge \frac{t2^\ell}{s2^j}\Big)^\epsilon\int_{X} \frac{1}{2^{jn}}\Big(1+
\frac{\varrho(x,z)}{2^j}\Big)^{-\lambda}|Q_{t2^{j-1}}(L)f(z)| \,\wrt\mu(z).
\end{aligned}
\end{equation}
We note that, since $s,t\in [1,2]$, 
\begin{equation}\label{eq2b-proof prop1}
\begin{aligned}
\frac{|Q_{t2^{j-1}}(L)f(z)|}{(1+\varrho(x,z)/2^j)^\lambda}\Big(\frac{s2^j}{t2^\ell}\wedge \frac{t2^\ell}{s2^j}\Big)^\epsilon
&\eqsim \frac{|Q_{t2^{j-1}}(L)f(z)|}{(1+\varrho(x,z)/(t2^{j-1}))^\lambda}\Big(\frac{t2^j}{t2^\ell}\wedge \frac{t2^\ell}{t2^j}\Big)^\epsilon\\
&\lesssim Q^*_{L,\epsilon}f(t2^\ell,x).
\end{aligned}
\end{equation}
Together with \eqref{eq2a-proof prop1} and our choices that $\theta\in (0,p)$ and $\lambda\theta > n$, this implies that
\begin{equation}\label{eq2-proof prop1}
\begin{aligned}
&\frac{|A(y)|}{(1+\varrho(x,y)/(s2^j))^\lambda} \Big(\frac{s2^j}{t2^\ell}\wedge \frac{t2^\ell}{s2^j}\Big)^\epsilon\\
&\qquad \lesssim [Q^*_{L,\epsilon}f(t2^\ell,x)]^{1-\theta} \Big(\frac{s2^j}{t2^\ell}\wedge \frac{t2^\ell}{s2^j}\Big)^{\theta\epsilon}\\
&\qquad\qquad\times \int_{X} \frac{1}{2^{jn}}\Big(1+
\frac{\varrho(x,z)}{2^j}\Big)^{-\lambda\theta}|Q_{t2^{j-1}}(L)f(z)|^\theta \,\wrt \mu(z)\\
&\qquad \lesssim 2^{-\theta\epsilon|j-\ell|}[Q^*_{L,\epsilon}f(t2^\ell,x)]^{1-\theta} \\
&\qquad\qquad\times \int_{X} \frac{1}{2^{jn}}\Big(1+
\frac{\varrho(x,z)}{2^j}\Big)^{-\lambda\theta}|Q_{t2^{j-1}}(L)f(z)|^\theta \,\wrt \mu(z).
\end{aligned}
\end{equation}

For the term $B(y)$,
\begin{equation*}
\begin{aligned}
Q_u(L)Q_u(L)Q_{s2^j}(L)=
\Big(\frac{2s}{t}\Big)^m(u^mL)^2 \exp(-[2u^m+[(4s)^m-t^m]2^{(j-2)m}]L) Q_{t2^{j-2}}(L).
\end{aligned}
\end{equation*}
Now $2u^m+[(4s)^m-t^m]2^{(j-2)m}\eqsim 2^{jm}$ for all $s,t\in [1,2]$ and all $0<u<2^j$. 
Together with \eqref{qt}, this implies that the kernel $H_{j,s,t}$ of $(u^mL)^2 \exp(-[2u^m+[(4s)^m-t^m]2^{(j-2)m}]L)$ satisfies
\[
|H_{j,s,t}(x,y)|\lesssim \Big(\frac{u}{2^j}\Big)^{2m}\frac{C}{2^{jn}}\Big(1+
\frac{\varrho(x,y)}{2^j}\Big)^{-n-\delta}.
\]
Therefore, for all $\epsilon\in \R^+$,
\begin{equation}\label{eq3a-proof prop1}
\begin{aligned}
&\frac{|A(y)|}{(1+\varrho(x,y)/(s2^j))^\lambda}\Big(\frac{s2^j}{t2^\ell}\wedge \frac{t2^\ell}{s2^j}\Big)^\epsilon\\
&\qquad\lesssim \Big(\frac{s2^j}{t2^\ell}\wedge \frac{t2^\ell}{s2^j}\Big)^\epsilon\Big(1+
\frac{\varrho(x,y)}{s2^j}\Big)^{-\lambda} \\
&\qquad\qquad\qquad \times \int_0^{2^j}\int_{X} \Big(\frac{u}{2^j}\Big)^{2m}\frac{1}{2^{jn}}\Big(1+
\frac{\varrho(y,z)}{2^j}\Big)^{-n-\delta}|Q_{t2^{j-2}}(L)f(z)| \,\wrt\mu(z)\frac{\wrt u}{u}.
\end{aligned}
\end{equation}
Similarly to \eqref{eq2b-proof prop1}, 
\[
\frac{|Q_{t2^{j-2}}(L)f(z)|}{(1+\varrho(x,z)/2^j)^\lambda}\Big(\frac{s2^j}{t2^\ell}\wedge \frac{t2^\ell}{s2^j}\Big)^\epsilon\lesssim Q^*_{L,\epsilon}f(t2^\ell,x).
\]
This and \eqref{eq3a-proof prop1} yield that
\begin{equation}\label{eq3-proof prop1}
\begin{aligned}
&\frac{|A(y)|}{(1+\varrho(x,y)/(s2^j))^\lambda}\Big(\frac{s2^j}{t2^\ell}\wedge \frac{t2^\ell}{s2^j}\Big)^\epsilon\\
&\qquad\lesssim 2^{-\theta\epsilon|j-\ell|}[Q^*_{L,\epsilon}f(t2^\ell,x)]^{1-\theta}\int_{X} \frac{1}{2^{jn}}\Big(1+
\frac{\varrho(x,z)}{2^j}\Big)^{-\lambda\theta}|Q_{t2^{j-2}}(L)f(z)|^\theta \,\wrt\mu(z).
\end{aligned}
\end{equation}
From \eqref{eq1-proof prop1}, \eqref{eq2-proof prop1} and \eqref{eq3-proof prop1} we find that, for all $\ell, j\in \Z $ and $s, t\in [1,2]$,   
\begin{equation*}
\begin{aligned}
&\frac{|Q_{s2^j}(L)f(y)|}{(1+\varrho(x,y)/(s2^j))^\lambda} \Big(\frac{s2^j}{t2^\ell}\wedge \frac{t2^\ell}{s2^j}\Big)^\epsilon\\
&\qquad\lesssim \sum_{k\in \Z }2^{-|\ell-k|\theta\epsilon}[Q^*_{L,\epsilon}f(t2^\ell,x)]^{1-\theta}\int_{X} \frac{1}{2^{kn}}\Big(1+
\frac{\varrho(x,z)}{2^k}\Big)^{-\lambda\theta}|Q_{t2^{k-1}}(L)f(z)|^\theta \,\wrt\mu(z)\\
&\qquad\qquad + \sum_{k\in \Z }2^{-|\ell-k|\theta\epsilon}[Q^*_{L,\epsilon}f(t2^\ell,x)]^{1-\theta}\int_{X} \frac{1}{2^{kn}}\Big(1+
\frac{\varrho(x,z)}{2^k}\Big)^{-\lambda\theta}|Q_{t2^{k-2}}(L)f(z)|^\theta \,\wrt\mu(z)\\
&\qquad\lesssim \sum_{k\in \Z }2^{-|\ell-k|\theta\epsilon}[Q^*_{L,\epsilon}f(t2^\ell,x)]^{1-\theta}\int_{X} \frac{1}{2^{kn}}\Big(1+
\frac{\varrho(x,z)}{2^k}\Big)^{-\lambda\theta}|Q_{t2^{k}}(L)f(z)|^\theta \,\wrt\mu(z).
\end{aligned}
\end{equation*}
Taking the supremum over all $j\in \Z $ and $s\in [1,2]$ we obtain
\begin{equation*}
\begin{aligned}
Q^*_{L,\epsilon}f(t2^\ell,x)^{\theta}
&\lesssim \sum_{k\in \Z }2^{-|\ell-k|\theta\epsilon}\int_{X} \frac{1}{2^{kn}}\Big(1+
\frac{\varrho(x,z)}{2^k}\Big)^{-\lambda\theta}|Q_{t2^{k}}(L)f(z)|^\theta \,\wrt\mu(z).
\end{aligned}
\end{equation*}
Consequently, by Minkowski's inequality,
\begin{equation*}
\begin{aligned}
&\biggl[\int_{1}^2Q^*_{L,\epsilon}f(t2^\ell,x))^2\,\frac{\wrt t}{t}\biggr]^{\theta/2} \\
&\qquad\lesssim \sum_{k\in \Z }2^{-|\ell-k|\theta\epsilon}\int_{X} \frac{1}{2^{kn}}\Big(1+
\frac{\varrho(x,z)}{2^k}\Big)^{-\lambda\theta}\biggl[\int_{1}^2|Q_{t2^{k}}(L)f(z)|^2\,\frac{\wrt t}{t} \biggr]^{\theta/2}\,\wrt\mu(z).
\end{aligned}
\end{equation*}
A change of variable implies that
\begin{equation*}
\begin{aligned}
&
\biggl[\int_{2^\ell}^{2^{\ell+1}}Q^*_{L,\epsilon}f(t,x))^2\,\,\frac{\wrt t}{t}\biggr]^{\theta/2} \\
&\qquad\lesssim \sum_{k\in \Z }2^{-|\ell-k|\theta\epsilon}\int_{X} \frac{1}{2^{kn}}\Big(1+
\frac{\varrho(x,z)}{2^k}\Big)^{-\lambda\theta}\biggl[\int_{2^{k}}^{2^{k+1}}|Q_{t}(L)f(z)|^2\,\frac{\wrt t}{t} \biggr]^{\theta/2}\,\wrt\mu(z).
\end{aligned}
\end{equation*}

We now apply Lemma \ref{lem-ele est} and see that
\begin{equation*}
\begin{aligned}
\biggl[\int_{2^\ell}^{2^{\ell+1}}Q^*_{L,\epsilon}f(t,x))^2\,\frac{\wrt t}{t}\biggr]^{\theta/2}
&\lesssim \sum_{k\in \Z }2^{-|\ell-k|\theta\epsilon}\mathcal{M}\biggl(\biggl[\int_{2^{k}}^{2^{k+1}}|Q_{t}(L)f(z)|^2 \,\frac{\wrt t}{t}\biggr]^{\theta/2}\biggr)(x),
\end{aligned}
\end{equation*}
so
\begin{equation*}
\begin{aligned}
\biggl[\int_{2^\ell}^{2^{\ell+1}}Q^*_{L,\epsilon}f(t,x))^2\,\frac{\wrt t}{t}\biggr]^{1/2}
&\lesssim \sum_{k\in \Z }2^{-|\ell-k|\epsilon}\mathcal{M}_\theta\biggl(\biggl[\int_{2^{k}}^{2^{k+1}}|Q_{t}(L)f(z)|^2 \,\frac{\wrt t}{t}\biggr]^{1/2}\biggr)(x) ,
\end{aligned}
\end{equation*}
where $\mathcal{M}_\theta f=[\mathcal{M}(|f|^\theta)]^{1/\theta}$.
Therefore
\[
\sum_{\ell\in \Z }\int_{2^\ell}^{2^{\ell+1}}Q^*_{L,\epsilon}f(t,x))^2\,\frac{\wrt t}{t}\lesssim \sum_{\ell\in \Z }\biggl[\sum_{k\in \Z }2^{-|\ell-k|\epsilon}\mathcal{M}_\theta\biggl(\biggl[\int_{2^{k}}^{2^{k+1}}|Q_{t}(L)f(z)|^2 \,\frac{\wrt t}{t}\biggr]^{1/2}\biggr)(x)\biggr]^2.
\]
From Young's inequality, it follows that
\[
\biggl[\int_{2^\ell}^{2^{\ell+1}}Q^*_{L,\epsilon}f(t,x))^2\,\frac{\wrt t}{t}\biggr]^{1/2}\lesssim \biggl[\sum_{k\in \Z }\mathcal{M}_\theta\Big(\biggl[\int_{2^{k}}^{2^{k+1}}|Q_{t}(L)f(z)|^2 \,\frac{\wrt t}{t}\biggr]^{1/2}\Big)^2(x)\biggr]^{1/2}.
\]
From the Fefferman--Stein maximal inequality, we deduce that
\begin{equation}\label{eq-Q*Q}
\bigg\|\biggl[\int_{0}^{\infty}Q^*_{L,\epsilon}f(t,x))^2\,\frac{\wrt t}{t}\biggr]^{1/2}\biggr\|_{L^p(X)}
\lesssim \biggl\|\biggl[\int_{0}^{\infty}|Q_t(L)f|^2\,\frac{\wrt t}{t}\biggr]^{1/2}\biggr\|_{L^p(X)}.
\end{equation}
On the other hand, it is obvious that
\[
S_Lf(x)\lesssim \biggl[\int_{0}^{\infty}Q^*_{L,\epsilon}f(t,x))^2\,\frac{\wrt t}{t}\biggr]^{1/2} .
\]
This, together with \eqref{eq-Q*Q}, yields the inequality
\[
\|S_Lf\|_{L^p(X)}\lesssim \|G_Lf\|_{L^p(X)},
\]
which completes the proof.
\end{proof}

\bigskip
{\bf Acknowledgements.} 
The authors were supported by grant DP220100285 of the Australian Research Council.
The authors thank Ben Goldys and John Nolan for helpful references on the L\'evy stable distributions, and 
X. T.  Duong thanks El Maati Ouhabaz and Thierry Coulhon for helpful discussions.

\end{document}